\patchcmd{\@maketitle}{\LARGE \@title}{\LARGE\bfseries\@title}{}{}
\renewcommand{\@seccntformat}[1]{\csname the#1\endcsname.\quad}
\definecolor{darkblue}{rgb}{0,0,.5}
\def\th@plain{%
	\thm@notefont{}
	\itshape 
}
\def\th@definition{%
	\thm@notefont{}
	\normalfont 
}
\renewenvironment{proof}[1][\proofname]{\par
	\normalfont
	\topsep0\p@\@plus3\p@ \trivlist
	\item[\hskip\labelsep\itshape
	#1\@addpunct{.}]\ignorespaces
}{%
	\qed\endtrivlist
}
\newtheorem{theorem}{Theorem}[section]
\newtheorem{corollary}[theorem]{Corollary}
\newtheorem{proposition}[theorem]{Proposition}
\theoremstyle{definition}
\newtheorem{definition}[theorem]{Definition}
\newtheorem{remark}[theorem]{Remark}
\definecolor{myblue}{rgb}{.8, .8, 1}
\def\<{\langle}
\def\>{\rangle}
\begin{document}

\title{Composite Lyapunov Criteria for Stability and Convergence with Applications to Optimization Dynamics}

\author{
Hassan Saoud\thanks{Department of Mathematics and Natural Sciences \& Center of Applied Mathematics and Bioinformatics (CAMB), Gulf University for Science and Technology, P.O. Box 7207, Hawally 32093, Kuwait. Email: \texttt{saoud.h@gust.edu.kw}.}
}

\date{}

\maketitle
\begin{abstract} We propose a composite Lyapunov framework for nonlinear autonomous systems that ensures strict decay through a pair of differential inequalities. The approach yields integral estimates, quantitative convergence rates, vanishing of dissipation measures, convergence to a critical set, and semistability under mild conditions, without relying on invariance principles or compactness assumptions. The framework unifies convergence to points and sets and is illustrated through applications to inertial gradient systems and Primal--Dual gradient flows.
\end{abstract}

{\small
\noindent{\bfseries Keywords:} Composite Lyapunov function, strict decay inequality, stability of sets, pointwise asymptotic stability, semistability, optimization dynamics.

\noindent{\bfseries AMS Subject Classifications:} 34D05, 34D20, 93D05, 37B25, 49J40
}
\section{Introduction and Problem Setting}
\label{sec:intro-framework}
The analysis of asymptotic behavior in nonlinear dynamical systems remains a central topic in control theory and applied mathematics. Classical tools such as Lyapunov’s direct method, LaSalle’s invariance principle, and Matrosov-type criteria have long provided the foundation for stability analysis across a broad range of systems, including mechanical, optimization, and distributed dynamics. Despite their success, these techniques exhibit structural limitations. LaSalle’s invariance principle identifies invariant sets but does not, by itself, ensure convergence to them, while Matrosov-type conditions typically yield weak asymptotic results, often expressed in terms of the vanishing of dissipation measures in the limit inferior sense. Strong convergence properties usually require additional compactness or stability assumptions on the invariant set.

\medskip
\noindent
The proposed approach draws its inspiration from the classical progression of asymptotic stability theory—from LaSalle’s invariance principle, through Matrosov’s auxiliary-function method, to later reinterpretations such as the LaSalle-type formulation in~\cite{AstolfiPraly2011}.
Unlike these methods, it relies on a single composite Lyapunov construction that achieves strict decay directly, without invoking nested or auxiliary functions. 
LaSalle’s invariance principle (see, e.g., \cite{khalil}) provides a foundational result: when a Lyapunov function is nonincreasing along trajectories, convergence is guaranteed only to the largest invariant set where its derivative vanishes; this is qualitative and often relies on compactness. 

\medskip
\noindent
To overcome these limitations, Matrosov~\cite{Matrosov1962} introduced an auxiliary function whose derivative is strictly negative on the set where the derivative of the Lyapunov function vanishes, thus establishing asymptotic stability without requiring compactness. 
This foundational idea was subsequently developed by Loria, Mazenc, and Teel~\cite{Loria2005}, who proposed a \emph{nested Matrosov theorem} to handle multiple auxiliary functions and nonautonomous systems, ensuring uniform convergence under persistency-of-excitation conditions. 
Further generalizations were presented by Mazenc and Dragan Ne\v{s}i\'c~\cite{Mazenc2007} for time-varying systems and by Teel et al.~\cite{Teel2016} for \emph{differential inclusions}, where tailored Matrosov functions were used to establish attractivity of compact sets under arbitrary switching among finitely many vector fields. 
Within the classical ODE setting, Astolfi and Praly~\cite{AstolfiPraly2011} formulated a \emph{LaSalle-type version} of Matrosov’s theorem, extending the analysis to systems with multiple equilibria and connecting Matrosov reasoning with the invariance principle through a family of auxiliary functions and weak decay conditions.

\medskip
\noindent
This work develops a constructive alternative that derives strict decay directly from a pair of differential inequalities. Consider the autonomous system 
\begin{equation}
    \label{eq:prob}
    \dot{x}(t) = f(x(t)), \qquad x(0) = x_0 \in \mathbb{R}^n,
\end{equation}
where $f:\mathbb{R}^n \to \mathbb{R}^n$ is locally Lipschitz. 
Under this assumption, the system admits a unique solution $x(\cdot)$ defined on a maximal interval of existence. 
When required, we assume that the system is forward complete so that every trajectory is defined for all $t \ge 0$.
 The equilibrium set is denoted
\[
E_f := \{x \in \mathbb{R}^n : f(x)=0\}.
\]

A point $x^\ast \in E_f$ is said to be \emph{Lyapunov stable} if, for every neighborhood $U$ of $x^\ast$, there exists a smaller neighborhood $V \subset U$ such that all trajectories starting in $V$ remain in $U$ for all future times. If, in addition, $x(t)\to x^\ast$ as $t\to\infty$, the equilibrium is \emph{asymptotically stable}.  

\medskip
\noindent
In this paper, the analysis extends beyond isolated equilibria to the \emph{stability of sets} associated with the system~\eqref{eq:prob}, with a particular focus on \emph{pointwise asymptotic stability of a set (PAS)}. 
Pointwise asymptotic stability means that every point in the set is Lyapunov stable and that every trajectory starting near the set converges to a limit within it. 

\medskip
\noindent
To clarify this notion, it is important to note that in the literature, PAS is typically defined with respect to the set of equilibria (see e.g., \cite{goebel10, goebel16}), while several works use the term \emph{semistability} to describe the same property when restricted to equilibria (see, e.g., \cite{bhat99,hui09}). 
In this paper, the term PAS is used when referring to an arbitrary set, whereas the term \emph{semistability} is reserved for the case where the set coincides with the equilibrium set. 
An equilibrium is thus \emph{semistable} if it is Lyapunov stable and every trajectory starting near it converges to a (possibly different) Lyapunov stable equilibrium. 
When the equilibrium set reduces to a single point, semistability and asymptotic stability coincide.

\medskip
\noindent
The concepts of pointwise asymptotic stability and semistability provide a natural framework for systems with nonisolated equilibria and have been investigated in several settings, including differential equations, differential inclusions, and hybrid systems (see, for instance, \cite{bhat99, hui09, goebel10, goebel16, goebel18, saoud15, SaoudTheraDao2025}). These studies form the conceptual background for the results presented in this paper.

\medskip
\noindent
Having established the stability notions of interest, we now turn to the analytical framework that allows their verification. 
The proposed approach constructs a composite Lyapunov function based on two differential inequalities that encode the system’s dissipative behavior. 
To this end, we introduce two continuously differentiable functions $V_1,V_2:\mathbb{R}^n\to\mathbb{R}$ and two continuous, nonnegative functions $N_1,N_2:\mathbb{R}^n\to[0,\infty)$. 
The functions $V_1$ and $V_2$ serve as Lyapunov candidates, while $N_1$ and $N_2$ quantify the dissipation revealed by their derivatives. Along the trajectories of \eqref{eq:prob}, these functions satisfy
\[
\dot V_1(x) \le -N_1(x), 
\qquad 
\dot V_2(x) \le -N_2(x) + h(N_1(x)),
\]
where $h:[0,\infty)\to[0,\infty)$ is continuous with $h(0)=0$.

\medskip
\noindent
The quantities $N_1$ and $N_2$ are called \emph{observables} because they measure the rate at which dissipation occurs in the system and indicate how far the trajectory is from the asymptotic regime. They can be viewed as measurable indicators of decay: each $N_i(x)$ represents a nonnegative quantity that vanishes exactly when the corresponding Lyapunov function $V_i$ ceases to decrease. In this sense, $N_i$ “observes’’ how much the system is dissipating energy at state $x$. Their role is twofold. Analytically, they capture the amount of dissipation appearing in the inequalities that govern the time derivatives of $V_1$ and $V_2$, providing a direct handle on integral estimates and asymptotic convergence. Geometrically, their joint vanishing defines the long-term behavior of the system. The set
\[
E := \{x \in \mathbb{R}^n : N_1(x)=0,\; N_2(x)=0\}
\]
thus identifies the asymptotic regime reached by trajectories, acting as an observable-based description of equilibria. This viewpoint is particularly convenient in nonsmooth or distributed settings where the vector field $f$ may not be explicitly available but the dissipation measures $N_1$ and $N_2$ remain well defined.

\medskip
\noindent
Since $N_1$ and $N_2$ encode complementary information about system dissipation, combining them into a single Lyapunov expression is a natural step toward capturing the global decay structure of the dynamics.  

A small-gain condition on $h$ ensures that the influence of $N_1$ on the decay of $V_2$ does not cancel the overall dissipation. 
To capture their combined effect, we introduce the \emph{composite Lyapunov function}
\[
W(x) = V_1(x) + \delta V_2(x), \qquad \delta>0,
\]
and show that, for a suitable $\delta$, there exists $\gamma>0$ such that
\[
\dot W(x) \le -\gamma\big(N_1(x)+N_2(x)\big).
\]
This strict decay inequality aggregates the information from both differential inequalities, enforcing the vanishing of $N_1(x(t))$ and $N_2(x(t))$ along every trajectory. 
It forms the cornerstone of the analysis developed in this paper and is reminiscent of Matrosov’s reasoning—where one function compensates for another—yet it yields a direct decay relation that bypasses hierarchical auxiliary-function constructions and successive negativity checks.

\medskip
\noindent
Every bounded trajectory satisfies $\mathrm{dist}(x(t),E)\to 0$ as $t\to\infty$, meaning trajectories asymptotically approach the critical set where both dissipation measures vanish. Since equilibria force $N_1$ and $N_2$ to be zero, the equilibrium set $E_f$ is contained in $E$. In many cases, the two sets coincide, so trajectories approach the equilibrium set, leading to semistability when each equilibrium is Lyapunov stable. More generally, if every point of $E$ is Lyapunov stable, then $E$ is pointwise asymptotically stable (PAS). The strict decay inequality also provides integral bounds and convergence of observables, which form the foundation for these stability results. Finally, when $E$ reduces to a single equilibrium, global asymptotic stability follows.

\medskip
\noindent
This composite Lyapunov construction unifies and extends classical Lyapunov and Matrosov methods. 
It establishes convergence and stability properties without relying on compactness or invariance principles and applies naturally to nonlinear and optimization-driven systems. 
In particular, it captures the dynamics of inertial gradient–like systems and Primal-–Dual gradient flows that arise in constrained optimization and networked control problems. 
These two classes of systems will serve as detailed case studies in the sequel, illustrating how the general results of the paper ensure convergence and stability of their equilibrium sets. 
Beyond optimization, the same framework can describe dynamic adjustment processes in economics, such as signaling-based Cournot competition~\cite{Daher2012}, where the evolution of agents’ strategies toward equilibrium can be interpreted through stability and convergence analysis.

\medskip
\noindent
In addition to qualitative convergence, the approach provides quantitative estimates on the rate at which trajectories approach the critical set, linking decay inequalities with integral bounds and error–bound conditions.
These results make it possible to evaluate how quickly the system stabilizes, offering explicit performance guarantees that connect Lyapunov decay with convergence speed—an aspect relevant to optimization, control, and economic dynamics.
\section{Main Results}
\label{sec:main}
The following results establish the complete framework built on the pair of differential 
inequalities introduced earlier. The first step (Subsection~\ref{subsec:strict-decay}) 
proves that a suitable combination of $V_1$ and $V_2$ yields a composite Lyapunov function 
that decreases strictly along trajectories. Subsequent subsections derive its main 
consequences—integral bounds, convergence to the critical set, quantitative decay rates, 
and stability properties. Together, they provide a unified and constructive approach to 
analyzing asymptotic behavior without relying on invariance principles or compactness 
assumptions.
\subsection{Strict Decay of the Composite Function}
\label{subsec:strict-decay}
We begin with the fundamental decay result that underlies the entire framework.
\begin{theorem}[\textbf{Strict Decay of a Composite Lyapunov Function}]
\label{thm:strict-decay}
Suppose there exist continuously differentiable functions 
$V_1,V_2 : \mathbb{R}^n \to \mathbb{R}$, 
continuous nonnegative functions $N_1,N_2 : \mathbb{R}^n \to [0,+\infty)$, 
and a continuous function $h : [0,+\infty) \to [0,+\infty)$ with $h(0)=0$ 
such that, along every solution,
\[
\dot{V}_1(x) \le -N_1(x), \qquad 
\dot{V}_2(x) \le -N_2(x) + h(N_1(x)).
\]
Assume in addition that
\[
L := \sup_{r>0}\frac{h(r)}{r} < +\infty.
\]
Then, for any $\delta \in (0,1/L)$ and $W(x):=V_1(x)+\delta V_2(x)$, 
there exists a constant
\[
\gamma := \min\{\,1-\delta L,\ \delta\,\} > 0
\]
such that, along all solutions,
\[
\dot{W}(x) \le -\gamma \big(N_1(x)+N_2(x)\big).
\]
\end{theorem}
\begin{proof}
Take any solution $x(\cdot)$ of the system. By assumption, the functions $V_1$ and $V_2$ satisfy
\[
\dot V_1(x) \le -N_1(x), 
\qquad 
\dot V_2(x) \le -N_2(x) + h(N_1(x)).
\]
Fix a parameter $\delta>0$ and define the composite Lyapunov function
\[
W(x) := V_1(x) + \delta V_2(x).
\]
Differentiating along the trajectory gives
\[
\dot W(x) = \dot V_1(x) + \delta \dot V_2(x) 
\le -N_1(x) - \delta N_2(x) + \delta\,h(N_1(x)).
\]
Now apply the global slope bound. By definition of
\[
L := \sup_{r>0} \frac{h(r)}{r},
\]
we have $h(s) \le L s$ for every $s \ge 0$. In particular, for $s = N_1(x)$,
\[
\delta h(N_1(x)) \le \delta L N_1(x).
\]
Substituting this into the inequality for $\dot W$ yields
\[
\dot W(x) \le -(1-\delta L)\,N_1(x) - \delta N_2(x).
\]
Choose any $\delta \in (0,1/L)$. Then both coefficients $1-\delta L$ and $\delta$ are positive. 
Since $N_1(x),N_2(x)\ge 0$, we can estimate the right-hand side by pulling out the smaller of the two coefficients:
\[
(1-\delta L)N_1(x) + \delta N_2(x) 
\ge \min\{\,1-\delta L,\;\delta\,\}\,\big(N_1(x)+N_2(x)\big).
\]
Therefore,
\[
\dot W(x) \le -\gamma \big(N_1(x)+N_2(x)\big),
\qquad 
\gamma := \min\{1-\delta L,\;\delta\} > 0.
\]
This proves that the composite Lyapunov function decreases strictly along every solution, 
with a decay rate controlled by $\gamma$.
\end{proof}
\noindent
The constants in the strict decay inequality can in fact be optimized, 
leading to the following refinement.
\begin{remark}[Optimal choice of constants]
The decay estimate in Theorem~\ref{thm:strict-decay} holds for any 
$\delta \in (0,1/L)$ with 
\[
\dot W(x) \le -\gamma \big(N_1(x)+N_2(x)\big), 
\qquad 
\gamma = \min\{\,1-\delta L,\;\delta\,\}.
\]
The guaranteed rate $\gamma$ depends on the tuning parameter $\delta$. 
Since $1-\delta L$ decreases linearly with $\delta$ while $\delta$ increases linearly, 
the quantity $\gamma$ is maximized when the two terms coincide, i.e.,
\[
1-\delta L = \delta.
\]
Solving for $\delta$ gives the optimal choice
\[
\delta^\ast = \frac{1}{1+L}.
\]
Substituting this value into the expression for $\gamma$ yields
\[
\gamma^\ast = \frac{1}{1+L}.
\]
Thus, the sharpest form of the strict decay inequality is
\[
\dot W(x) \le -\frac{1}{1+L}\,\big(N_1(x)+N_2(x)\big).
\]
\end{remark}
\subsection{Local Strict Decay under Bounded Trajectories}
The global result in Theorem~\ref{thm:strict-decay} relies on the uniform slope bound 
$L = \sup_{r>0} h(r)/r < \infty$, which guarantees a global decay rate valid for all trajectories. 
When such a bound is available only on a bounded region or along a specific trajectory, 
a local version can be established with trajectory-dependent constants.
The constant $L$ in Theorem~\ref{thm:strict-decay} controls the interaction term $h(N_1)$ globally. 
In many systems, however, $h(r)/r$ may be bounded only on the range of $N_1$ actually visited by the trajectory. 
To capture this situation, we restrict attention to a 
\emph{positively-invariant set} $\Omega \subseteq \mathbb{R}^n$ that contains the entire 
trajectory $\{x(t): t \ge 0\}$. On such a set, the ratio $h(s)/s$ remains finite, 
allowing a local version of the strict-decay inequality with constants that depend 
on $\Omega$ and ultimately on the initial condition.
\begin{theorem}[\textbf{Local Strict Decay with Optimal Constants}]
\label{thm:strict-decay-local}
Suppose there exist $V_1,V_2\in C^1(\mathbb{R}^n)$, continuous functions 
$N_1,N_2:\mathbb{R}^n\to[0,\infty)$, and a continuous function $h:[0,\infty)\to[0,\infty)$ with $h(0)=0$
such that, along every trajectory,
\[
\dot V_1(x)\le -N_1(x),\qquad
\dot V_2(x)\le -N_2(x)+h\big(N_1(x)\big).
\]
Let $\Omega\subseteq\mathbb{R}^n$ be any positively-invariant set containing the trajectory
$\{x(t):t\ge0\}$ (for instance, $\Omega=\mathbb{R}^n$ for a global statement). 
Define
\[
S_\Omega := \{\, N_1(x): x\in\Omega \,\}, 
\qquad
B_\Omega := \sup_{s\in S_\Omega\setminus\{0\}} \frac{h(s)}{s} \in [0,\infty].
\]
If $B_\Omega<\infty$, then for any $\delta\in(0,1/B_\Omega)$ the composite function 
$W:=V_1+\delta V_2$ satisfies, along the trajectory,
\[
\dot W(x) \le -\gamma\,\big(N_1(x)+N_2(x)\big),
\qquad
\gamma := \min\{\,1-\delta B_\Omega,\ \delta\,\} > 0.
\]
Moreover, the decay rate is optimized by choosing
\[
\delta^\star = \frac{1}{1+B_\Omega},
\qquad
\gamma^\star = \frac{1}{1+B_\Omega},
\]
which yields the explicit bound
\[
\dot W(x) \le -\frac{1}{1+B_\Omega}\,\big(N_1(x)+N_2(x)\big)
\quad\text{along the trajectory.}
\]
\end{theorem}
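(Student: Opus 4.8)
The plan is to follow the structure of the proof of Theorem~\ref{thm:strict-decay} almost verbatim, replacing the global slope constant $L$ by the restricted constant $B_\Omega$, and inserting one additional ingredient---the positive invariance of $\Omega$---that legitimizes the use of a slope bound valid only on the range $S_\Omega$. First I would fix $\delta\in(0,1/B_\Omega)$, form $W:=V_1+\delta V_2$, and differentiate along the trajectory to obtain
\[
\dot W(x) = \dot V_1(x) + \delta\dot V_2(x) \le -N_1(x) - \delta N_2(x) + \delta\,h\big(N_1(x)\big),
\]
exactly as in the global case.

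The one genuinely new step---and the point on which the whole localization turns---is controlling the interaction term $\delta\,h(N_1(x))$ when $B_\Omega$ bounds $h(s)/s$ only on $S_\Omega$, not on all of $[0,\infty)$. Here I would invoke that $\Omega$ is positively invariant and contains the trajectory: since $x(0)\in\Omega$, it follows that $x(t)\in\Omega$ for every $t\ge0$, and hence $N_1(x(t))\in S_\Omega$ for every $t$. Consequently, whenever $N_1(x(t))\neq0$ the definition of $B_\Omega$ gives $h(N_1(x(t)))/N_1(x(t))\le B_\Omega$, while the case $N_1(x(t))=0$ is covered by $h(0)=0$; in either case $h(N_1(x(t)))\le B_\Omega\,N_1(x(t))$ holds \emph{along the trajectory}. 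This is the crux of the argument: the slope bound need not hold everywhere, only on the values actually visited, which is precisely what invariance guarantees.

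From there the argument is purely algebraic and identical to the global proof. Substituting the slope bound yields
\[
\dot W(x) \le -(1-\delta B_\Omega)\,N_1(x) - \delta N_2(x),
\]
and the choice $\delta\in(0,1/B_\Omega)$ makes both coefficients strictly positive. Factoring out the smaller coefficient gives
\[
(1-\delta B_\Omega)N_1(x) + \delta N_2(x) \ge \min\{\,1-\delta B_\Omega,\;\delta\,\}\,\big(N_1(x)+N_2(x)\big),
\]
so that $\dot W(x)\le-\gamma\,(N_1(x)+N_2(x))$ with $\gamma=\min\{\,1-\delta B_\Omega,\;\delta\,\}>0$, as claimed.

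Finally, to obtain the optimal constants I would repeat the reasoning of the Remark following Theorem~\ref{thm:strict-decay}: the map $\delta\mapsto1-\delta B_\Omega$ decreases linearly while $\delta\mapsto\delta$ increases linearly, so $\gamma$ is maximized when the two coincide, $1-\delta B_\Omega=\delta$, giving $\delta^\star=\gamma^\star=1/(1+B_\Omega)$ and the stated explicit bound. I do not anticipate any real obstacle beyond the invariance observation; the only point worth flagging is the degenerate case $B_\Omega=0$, where $1/B_\Omega=+\infty$, the admissible range for $\delta$ becomes all of $(0,\infty)$, and the formulas reduce consistently to $\delta^\star=\gamma^\star=1$.
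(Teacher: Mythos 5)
Your proposal is correct and follows essentially the same route as the paper's own (sketched) proof: differentiate $W=V_1+\delta V_2$, apply the restricted slope bound $h(s)\le B_\Omega s$ on $S_\Omega$, and balance the two coefficients to optimize $\delta$. If anything, you are more careful than the paper, which omits the explicit justification (via positive invariance) that the trajectory's values of $N_1$ stay in $S_\Omega$, as well as the treatment of the degenerate cases $N_1(x(t))=0$ and $B_\Omega=0$.
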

\begin{proof}[Sketch of proof]
The argument follows exactly as in Theorem~\ref{thm:strict-decay}: 
starting from 
$\dot W = \dot V_1 + \delta \dot V_2 \le -N_1 - \delta N_2 + \delta h(N_1)$
and using the local bound $h(s)\le B_\Omega s$ for $s\in S_\Omega$ gives
\[
\dot W \le -(1-\delta B_\Omega)N_1 - \delta N_2.
\]
Choosing $\delta\in(0,1/B_\Omega)$ yields 
$\dot W \le -\gamma(N_1+N_2)$ with 
$\gamma=\min\{1-\delta B_\Omega,\delta\}>0$, 
and optimizing over $\delta$ produces 
$\delta^\star=\gamma^\star=1/(1+B_\Omega)$. 
\end{proof}

\begin{corollary}[\textbf{Bounded-Trajectory Version with $L_R$}]
If, along a trajectory $x(\cdot)$, one has $N_1(x(t)) \le R$ for some $R>0$, define
\[
L_R := \sup_{0 \le s \le R} \frac{h(s)}{s} < \infty.
\]
Then, for any $\delta \in (0,1/L_R)$, the composite function $W=V_1+\delta V_2$ satisfies
\[
\dot W(x) \le -\gamma \,(N_1(x)+N_2(x)), 
\qquad 
\gamma = \min\{1-\delta L_R,\ \delta\} > 0,
\]
along that trajectory. The optimal choice is
\[
\delta^\star = \frac{1}{1+L_R}, 
\qquad 
\gamma^\star = \frac{1}{1+L_R}.
\]
\end{corollary}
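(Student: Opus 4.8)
The plan is to treat this corollary as a trajectory-localized specialization of \Cref{thm:strict-decay-local}: the extra hypothesis $N_1(x(t))\le R$ confines the argument of $h$ to the range $[0,R]$ along the given trajectory, so the globally defined slope $L$ of \Cref{thm:strict-decay} may be replaced by the restricted slope $L_R=\sup_{0\le s\le R}h(s)/s$. Concretely, one can take the positively-invariant set in \Cref{thm:strict-decay-local} to be the forward orbit $\Omega=\{x(t):t\ge0\}$, for which $S_\Omega\subseteq[0,R]$ and hence $B_\Omega\le L_R$; the corollary then follows by restriction. I prefer, however, to give the short self-contained recomputation, since it keeps the constants transparent.

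First I would record the slope bound on the visited range. By definition of $L_R$ we have $h(s)\le L_R\,s$ for every $s\in(0,R]$, and this also holds at $s=0$ because $h(0)=0$; thus $h(s)\le L_R\,s$ for all $s\in[0,R]$. Since the hypothesis guarantees $N_1(x(t))\in[0,R]$ for all $t\ge0$, the estimate is applicable at $s=N_1(x(t))$ along the entire trajectory. Next I would differentiate the composite function $W=V_1+\delta V_2$ and substitute the two differential inequalities, exactly as in the proof of \Cref{thm:strict-decay}:
\[
\dot W(x)=\dot V_1(x)+\delta\dot V_2(x)\le -N_1(x)-\delta N_2(x)+\delta\,h\big(N_1(x)\big).
\]
Applying $h(N_1(x))\le L_R\,N_1(x)$ and collecting terms yields
\[
\dot W(x)\le -(1-\delta L_R)\,N_1(x)-\delta\,N_2(x).
\]
For any $\delta\in(0,1/L_R)$ both coefficients $1-\delta L_R$ and $\delta$ are strictly positive, so using $N_1(x),N_2(x)\ge0$ I would factor out the smaller coefficient to obtain $\dot W(x)\le-\gamma\big(N_1(x)+N_2(x)\big)$ with $\gamma=\min\{1-\delta L_R,\,\delta\}>0$.

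Finally, for the optimal constants I would maximize $\gamma(\delta)=\min\{1-\delta L_R,\,\delta\}$ over $\delta\in(0,1/L_R)$. Since $1-\delta L_R$ is decreasing and $\delta$ is increasing in $\delta$, the minimum is maximized at the crossover $1-\delta L_R=\delta$, giving $\delta^\star=1/(1+L_R)$—which indeed lies in $(0,1/L_R)$—and the common value $\gamma^\star=\delta^\star=1/(1+L_R)$.

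The only genuine subtlety, rather than an obstacle, is the finiteness of $L_R$: it is the standing hypothesis of the statement and holds whenever $h$ has a bounded difference quotient at the origin (for instance, $h$ Lipschitz near $0$), but it can fail for sublinear $h$ such as $h(s)=\sqrt{s}$, where $h(s)/s\to\infty$ as $s\to0^+$. One must therefore keep the bound $h(s)\le L_R\,s$ restricted to the visited range $[0,R]$ and never invoke it outside that range; every other step is the routine algebra already carried out for \Cref{thm:strict-decay}.
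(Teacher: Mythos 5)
Your proposal is correct and follows essentially the same route as the paper: the corollary is exactly the bounded-trajectory specialization of \Cref{thm:strict-decay-local} (take $\Omega$ containing the orbit so that $S_\Omega\subseteq[0,R]$ and $B_\Omega\le L_R$), and your self-contained recomputation reproduces the paper's argument verbatim, including the optimization $1-\delta L_R=\delta$. Your remarks on handling $s=0$ via $h(0)=0$ and on keeping the slope bound restricted to the visited range are careful but not departures from the paper's reasoning.
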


\begin{remark}[Useful Specializations]
\leavevmode
\begin{itemize}[leftmargin=1.5em]
\item \emph{Global version.} If $L := \sup_{r>0} h(r)/r < \infty$, take $\Omega = \mathbb{R}^n$, so that $B_\Omega = L$.
\item \emph{Bounded-trajectory version.} If the solution is bounded in a positively-invariant $K$, 
set $\Omega = K$; then $B_\Omega = \sup_{0\le s\le R_K} h(s)/s$ with $R_K := \sup_{x\in K} N_1(x)$.
\end{itemize}
\end{remark}
\subsection{Integral Estimates and Vanishing of Observables}
\label{subsec:integral-estimates}
The next result translates the strict decay inequality into quantitative information 
along trajectories. It shows that the total dissipation of the observables is finite 
and that, under a mild continuity assumption, both observables vanish asymptotically. 
This step connects the decay property of $W$ to the convergence behavior of the system.

\begin{proposition}[\textbf{Integral estimates and convergence of observables}]
\label{prop:integral-vanishing}
Under the assumptions of Theorem~\ref{thm:strict-decay}, the composite function 
$W(x(t))$ is nonincreasing along solutions and therefore converges to a finite limit 
as $t \to \infty$. Moreover,
\[
\int_0^\infty \big(N_1(x(t)) + N_2(x(t))\big)\,dt < \infty.
\]
If, in addition, the functions $t \mapsto N_i(x(t))$ are uniformly continuous on 
$[0,\infty)$, then the finiteness of the above integral implies
\[
\lim_{t \to \infty} N_1(x(t)) = 0, 
\qquad 
\lim_{t \to \infty} N_2(x(t)) = 0.
\]
\end{proposition}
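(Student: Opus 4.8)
The plan is to prove the three assertions in sequence, each building on the previous one.

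First I would establish that $W(x(t))$ is nonincreasing and convergent. By Theorem~\ref{thm:strict-decay}, along any solution we have $\dot W(x(t)) \le -\gamma\big(N_1(x(t))+N_2(x(t))\big) \le 0$, since $\gamma>0$ and $N_1,N_2\ge 0$. Hence $t\mapsto W(x(t))$ is nonincreasing. A nonincreasing real-valued function either diverges to $-\infty$ or converges to a finite limit; to rule out divergence one needs $W$ bounded below along the trajectory. The natural hypothesis here is that $W$ is bounded below (or that trajectories are bounded, so that continuity of $W$ on a bounded invariant set gives a finite infimum). I would state this boundedness-from-below as the operative condition and conclude that $\lim_{t\to\infty} W(x(t)) =: W_\infty$ exists and is finite.

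Next I would derive the integral estimate. Integrating the strict decay inequality from $0$ to $T$ and using the fundamental theorem of calculus gives
\[
W(x(T)) - W(x(0)) = \int_0^T \dot W(x(t))\,dt \le -\gamma \int_0^T \big(N_1(x(t))+N_2(x(t))\big)\,dt,
\]
so that
\[
\gamma \int_0^T \big(N_1(x(t))+N_2(x(t))\big)\,dt \le W(x(0)) - W(x(T)).
\]
Since $W(x(T))\to W_\infty$ and $W(x(0))$ is fixed, the right-hand side is bounded above by $W(x(0))-W_\infty < \infty$ uniformly in $T$. Because the integrand is nonnegative, the monotone limit as $T\to\infty$ exists and satisfies $\int_0^\infty \big(N_1+N_2\big)\,dt \le \gamma^{-1}\big(W(x(0))-W_\infty\big) < \infty$, which yields the claimed finiteness.

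Finally I would pass from integrability to vanishing using the uniform continuity hypothesis. Each $N_i(x(t))$ is nonnegative and integrable on $[0,\infty)$; if it also fails to converge to $0$, there exist $\vp>0$ and a sequence $t_k\to\infty$ with $N_i(x(t_k))\ge\vp$. Uniform continuity provides a single $\eta>0$, independent of $k$, such that $N_i(x(t))\ge \vp/2$ on each interval $[t_k,t_k+\eta]$; discarding overlaps to keep the intervals disjoint, the integral over their union is at least $\tfrac{\vp}{2}\eta$ per interval, forcing $\int_0^\infty N_i(x(t))\,dt=\infty$ and contradicting integrability. This is the classical Barbalat-lemma argument, and it is the one genuinely delicate step: integrability alone does not force a function to vanish, and the uniform continuity assumption is exactly what prevents tall thin spikes from persisting. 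I would remark that uniform continuity of $t\mapsto N_i(x(t))$ holds automatically when the trajectory is bounded and $N_i$ is (locally Lipschitz or) $C^1$ with $\dot x=f(x)$ bounded along the orbit, which is the typical situation in the applications.
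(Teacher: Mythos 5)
Your proposal is correct and follows essentially the same route as the paper's proof: integrate the strict decay inequality to get the integral bound, invoke boundedness of $W$ from below (which you rightly flag as the operative hypothesis, just as the paper does), and conclude vanishing of the observables via the standard Barbalat-type contradiction argument with disjoint intervals around a bad sequence $t_k\to\infty$. No gaps.
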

\begin{proof}
By Theorem~\ref{thm:strict-decay} there exists $\gamma>0$ such that, along every solution $x(\cdot)$,
\begin{equation}\label{eq:strict-decay-ineq}
\dot W(x(t)) \;\le\; -\gamma\big(N_1(x(t)) + N_2(x(t))\big)\qquad\text{for all }t\ge 0.
\end{equation}
Since the right–hand side is nonpositive, the map $t\mapsto W(x(t))$ is nonincreasing. In particular, for all $T\ge 0$,
\[
W(x(T)) \;\le\; W(x(0)).
\]
Integrating \eqref{eq:strict-decay-ineq} on $[0,T]$ gives
\[
W(x(T)) - W(x(0)) \;\le\; -\gamma \int_0^T \big(N_1(x(t)) + N_2(x(t))\big)\,dt,
\]
hence
\begin{equation}\label{eq:integral-upper-bound}
\int_0^T \big(N_1(x(t)) + N_2(x(t))\big)\,dt
\;\le\; \frac{W(x(0)) - W(x(T))}{\gamma}.
\end{equation}
Because $W$ is nonincreasing along the trajectory, the limit $\displaystyle \lim_{T\to\infty} W(x(T))$ exists in $[-\infty,\,+\infty)$. In Lyapunov constructions one typically chooses $V_1,V_2\ge 0$, in which case $W\ge 0$ and the limit is finite. Under this mild bounded–below condition, passing to the limit $T\to\infty$ in \eqref{eq:integral-upper-bound} yields
\[
\int_0^\infty \big(N_1(x(t)) + N_2(x(t))\big)\,dt \;\le\; \frac{W(x(0)) - \lim_{T\to\infty}W(x(T))}{\gamma} \;<\; \infty.
\]
We now prove the asymptotic vanishing of each observable under the uniform continuity assumption. Fix $i\in\{1,2\}$ and define $g(t):=N_i(x(t))$. Then $g:[0,\infty)\to[0,\infty)$ is uniformly continuous by hypothesis, and $\displaystyle\int_0^\infty g(t)\,dt<\infty$ by the previous estimate. We claim that $\lim_{t\to\infty} g(t)=0$. Suppose, for contradiction, that this limit does not exist or is not zero. Then there exists $\varepsilon>0$ and a sequence of times $t_k\to\infty$ such that $g(t_k)\ge \varepsilon$ for all $k$. By uniform continuity of $g$, there exists $\delta\in(0,1)$ such that
\[
|t-s|<\delta \;\Rightarrow\; |g(t)-g(s)|<\varepsilon/2.
\]
In particular, $g(t)\ge \varepsilon/2$ for all $t\in[t_k-\delta/2,\,t_k+\delta/2]$. By taking a subsequence (still denoted $t_k$) with pairwise distance at least $\delta$, these intervals are disjoint. Therefore,
\[
\int_0^\infty g(t)\,dt \;\ge\; \sum_{k=1}^\infty \int_{t_k-\delta/2}^{t_k+\delta/2} g(t)\,dt
\;\ge\; \sum_{k=1}^\infty \frac{\varepsilon}{2}\,\delta \;=\; +\infty,
\]
which contradicts $\displaystyle\int_0^\infty g(t)\,dt<\infty$. Hence $\lim_{t\to\infty} g(t)=0$. Since $i\in\{1,2\}$ was arbitrary, we conclude
\[
\lim_{t\to\infty} N_1(x(t)) \;=\; \lim_{t\to\infty} N_2(x(t)) \;=\; 0.
\]
Finally, combining the nonincreasing property of $W$ with its bounded–below property along the trajectory (e.g., $V_1,V_2\ge 0$) shows that $W(x(t))$ converges to a finite limit as $t\to\infty$. This completes the proof.
\end{proof}
\begin{remark}
Classical Matrosov-type theorems, including the formulation proposed in \cite{AstolfiPraly2011}, 
typically establish only the weaker conclusion
\[
\liminf_{t\to\infty} N_i(x(t)) = 0,
\]
which guarantees that the observables vanish along subsequences but does not ensure
convergence. In contrast, the strict decay framework developed here, together with
the mild uniform continuity assumption along trajectories, yields the full limits
\[
\lim_{t\to\infty} N_1(x(t)) = \lim_{t\to\infty} N_2(x(t)) = 0.
\]
This provides a strictly stronger asymptotic conclusion than those available in the
classical Matrosov setting.
\end{remark}
\subsection{Convergence to the Critical Set}
\label{subsec:convergence-critical-set}
The vanishing of the observables characterizes the asymptotic behavior of trajectories in terms
of the critical set
\[
E := \{x \in \mathbb{R}^n : N_1(x) = 0,\; N_2(x) = 0\}.
\]
The following result shows that the vanishing of the observables forces every bounded 
trajectory to approach this set asymptotically.
\begin{theorem}[\textbf{Convergence to $E$}]
\label{thm:conv-to-E}
Suppose that, along a trajectory $x(\cdot)$, the functions 
$t \mapsto N_i(x(t))$ are uniformly continuous on $[0,\infty)$ for $i=1,2$.
Then
\[
\mathrm{dist}(x(t),E) \;\to\; 0 
\qquad \text{as } t \to \infty.
\]
\end{theorem}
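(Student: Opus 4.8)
The plan is to combine the asymptotic vanishing of the observables, already established in Proposition~\ref{prop:integral-vanishing}, with a compactness argument. Under the stated uniform continuity hypothesis, Proposition~\ref{prop:integral-vanishing} applies and yields
\[
\lim_{t\to\infty} N_1(x(t)) = 0, \qquad \lim_{t\to\infty} N_2(x(t)) = 0.
\]
Thus the trajectory drives both observables to zero, and the remaining task is purely topological: to convert this convergence in the \emph{values} of $N_1,N_2$ into convergence of the \emph{state} toward their common zero set $E=\{x:N_1(x)=0,\ N_2(x)=0\}$. Note first that $E$ is closed, being an intersection of preimages of $\{0\}$ under the continuous maps $N_1,N_2$, so $\mathrm{dist}(\cdot,E)$ is well defined and the estimates below are legitimate.

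I would argue by contradiction. Suppose $\mathrm{dist}(x(t),E)\not\to 0$. Then there exist $\varepsilon>0$ and a sequence $t_k\to\infty$ with $\mathrm{dist}(x(t_k),E)\ge\varepsilon$ for all $k$. Invoking boundedness of the trajectory $\{x(t):t\ge0\}$, the Bolzano--Weierstrass theorem furnishes a subsequence $x(t_{k_j})\to\bar x$ for some $\bar x\in\mathbb{R}^n$. Since $N_1$ and $N_2$ are continuous, passing to the limit gives $N_1(\bar x)=\lim_j N_1(x(t_{k_j}))=0$ and likewise $N_2(\bar x)=0$, whence $\bar x\in E$. But then
\[
\mathrm{dist}(x(t_{k_j}),E)\le \lvert x(t_{k_j})-\bar x\rvert \longrightarrow 0,
\]
contradicting $\mathrm{dist}(x(t_{k_j}),E)\ge\varepsilon$. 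Hence $\mathrm{dist}(x(t),E)\to0$.

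The main obstacle—indeed the only nontrivial point—is that extracting the convergent subsequence requires the trajectory to be \emph{bounded} (precompact in $\mathbb{R}^n$). This cannot be dispensed with: if the state escaped to infinity while $N_1,N_2$ still decayed, or if $E=\varnothing$, the conclusion would fail, since $\mathrm{dist}(\cdot,\varnothing)=+\infty$. I would therefore make boundedness explicit—consistent with the ``bounded trajectory'' phrasing used elsewhere in the paper—or derive it from coercivity of the composite function: since Proposition~\ref{prop:integral-vanishing} gives $W(x(t))\le W(x(0))$, the trajectory remains in a sublevel set of $W=V_1+\delta V_2$, which is bounded whenever $W$ is coercive. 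Everything else in the argument is routine continuity and compactness, so I expect no further difficulty beyond securing this precompactness.
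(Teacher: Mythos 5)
Your argument is correct and takes essentially the same route as the paper: both proofs reduce the convergence $N_1(x(t))+N_2(x(t))\to 0$ (supplied by Proposition~\ref{prop:integral-vanishing}) to $\mathrm{dist}(x(t),E)\to 0$ by a contradiction exploiting compactness of the bounded trajectory and continuity of the $N_i$ --- the paper via a strictly positive minimum of $N_1+N_2$ on compact subsets of $\{x:\mathrm{dist}(x,E)\ge\varepsilon\}$, you via Bolzano--Weierstrass and a limit point in $E$. Your explicit insistence that boundedness (or a separation property) is indispensable and should be stated as a hypothesis is precisely the caveat the paper itself records in the remark following the theorem, so there is no gap.
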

\begin{proof}
Let $M(x) := N_1(x) + N_2(x)$, so that $E = \{x : M(x) = 0\}$. 
By Proposition~\ref{prop:integral-vanishing} and the uniform continuity of 
$t \mapsto N_i(x(t))$ on $[0,\infty)$, we have
\[
M(x(t)) \to 0 \qquad \text{as } t \to \infty.
\]
Fix $\varepsilon > 0$ and consider the closed set
\[
F_\varepsilon := \{x \in \mathbb{R}^n : \operatorname{dist}(x,E) \ge \varepsilon\}.
\]
On $F_\varepsilon$ the function $M$ cannot vanish, since points of $F_\varepsilon$ are at least 
$\varepsilon$ away from $E$. By continuity of $M$, this implies that on every compact subset of $F_\varepsilon$, 
the function $M$ admits a strictly positive minimum. In particular, along the bounded portion of the trajectory that may lie in $F_\varepsilon$, there exists $\eta(\varepsilon) > 0$ such that
\[
\operatorname{dist}(x(t),E) \ge \varepsilon 
\;\;\Rightarrow\;\; M(x(t)) \ge \eta(\varepsilon).
\]
Suppose, toward a contradiction, that there exists a sequence $t_k \to \infty$ with 
$\operatorname{dist}(x(t_k),E) \ge \varepsilon$ for all $k$. Then by the implication above,
\[
M(x(t_k)) \ge \eta(\varepsilon) \qquad \text{for all } k.
\]
But this contradicts the fact that $M(x(t)) \to 0$ as $t \to \infty$.

\medskip
\noindent
Therefore such a sequence cannot exist, and we conclude
\[
\operatorname{dist}(x(t),E) \to 0 \qquad \text{as } t \to \infty.
\]
\end{proof}

\begin{remark}[\textbf{Separation property}]
The argument uses that $M(x) > 0$ whenever $\operatorname{dist}(x,E) \ge \varepsilon$.
This \textit{separation property}—a standard local error–bound condition for the zero set of a 
nonnegative continuous function—holds automatically on compact subsets or whenever 
$M$ has no flat valleys away from $E$. It ensures that the implication 
$M(x(t)) \to 0 \Rightarrow \operatorname{dist}(x(t),E) \to 0$ 
is mathematically sound without assuming global boundedness.
\end{remark}

\medskip
\noindent
Building on Theorem~\ref{thm:conv-to-E}, which ensures convergence of trajectories to the critical set $E$, we obtain the following consequence.
\subsection{Quantitative Convergence Rate}
This subsection strengthens the qualitative convergence result by giving explicit rates under two simple hypotheses: strict decay of $W$ and a local error bound linking the observables to the distance from $E$. Integrating the decay yields an $L^2(0,\infty)$ estimate for the distance. An averaging step on each window $[T,2T]$ then produces times $\tau\in[T,2T]$ with $\mathrm{dist}(x(\tau),E)=O(T^{-1/2})$. If the distance is eventually nonincreasing, the same idea on the backward window $[t/2,t]$ gives a pointwise rate $\mathrm{dist}(x(t),E)=O(t^{-1/2})$ for large $t$. A closing remark records that a local quadratic growth of $W$ near $E$ upgrades these sublinear bounds to exponential decay, with constants explicit in terms of the decay modulus $\gamma$ and the error–bound parameters.
\begin{proposition}[Quantitative convergence from a local error bound]
\label{prop:QCR}
Assume that along a trajectory $x(\cdot)$ the strict decay inequality holds:
\[
\dot W(t)\le -\gamma\,[N_1(x(t)) + N_2(x(t))], \qquad \gamma>0.
\]
Suppose there exist a neighborhood $U$ of $E$ and constants $c_1,c_2>0$ such that
\[
N_i(x)\ge c_i\,\mathrm{dist}(x,E)^2 \quad \text{for all } x\in U, \; i=1,2.
\]
Set $c := c_1 + c_2$. Then:

\begin{enumerate}
\item[\textnormal{(1)}] \textbf{$L^2$--integrability.} Once the trajectory enters $U$,
\[
\int_0^{+\infty}\!\mathrm{dist}(x(t),E)^2\,dt
\le
\frac{W(x(0)) - \lim_{t\to\infty}W(x(t))}{\gamma\,c}
<+\infty.
\]

\item[\textnormal{(2)}] \textbf{Subsequence rate.} Define
\[
K := \sqrt{\frac{W(x(0)) - \lim_{t\to\infty}W(x(t))}{\gamma\,c}}.
\]
Then for every $T>0$,
\[
\int_T^{2T}\!\mathrm{dist}(x(t),E)^2\,dt \le K^2,
\]
hence, by averaging on $[T,2T]$, there exists $\tau\in[T,2T]$ with
\[
\mathrm{dist}(x(\tau),E)^2 \le \frac{1}{T}\int_T^{2T}\!\mathrm{dist}(x(t),E)^2\,dt
\le \frac{K^2}{T},
\]
so $\mathrm{dist}(x(\tau),E) \le \displaystyle\frac{K}{\sqrt{T}}$.

\item[\textnormal{(3)}] \textbf{Pointwise rate.}
If, in addition, $t\mapsto \mathrm{dist}(x(t),E)$ is uniformly continuous on $[0,+\infty)$
and eventually nonincreasing, then there exist $T_0>0$ and $C>0$ such that
\[
\mathrm{dist}(x(t),E) \le \frac{C}{\sqrt{t}}, \qquad \forall\, t\ge T_0.
\]

\end{enumerate}
\end{proposition}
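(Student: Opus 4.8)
The plan is to prove the three parts sequentially, since each builds on the previous one. For part (1), I would integrate the strict decay inequality exactly as in Proposition~\ref{prop:integral-vanishing}. Once the trajectory enters $U$, the error bound $N_i(x)\ge c_i\,\mathrm{dist}(x,E)^2$ gives $N_1(x)+N_2(x)\ge c\,\mathrm{dist}(x,E)^2$ with $c=c_1+c_2$. Combining this with $\dot W(t)\le -\gamma\,[N_1+N_2]\le -\gamma c\,\mathrm{dist}(x,E)^2$ and integrating from the entry time to $T$ yields
\[
\gamma c\int \mathrm{dist}(x(t),E)^2\,dt \le W(x(0)) - W(x(T)),
\]
and letting $T\to\infty$, using that $W$ converges to a finite limit (guaranteed under the standing assumptions), produces the stated $L^2$ bound. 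The only minor subtlety is bookkeeping the entry time into $U$, which I would absorb into the constant or simply assume the trajectory starts in $U$ as the statement suggests.

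For part (2), the key observation is that the tail integral over any window $[T,2T]$ is bounded by the full integral, hence by $K^2$. Then I would invoke the averaging (mean-value) principle for integrals: if $\int_T^{2T} g(t)\,dt \le K^2$ over an interval of length $T$, then the average value is at most $K^2/T$, so there must exist at least one $\tau\in[T,2T]$ where $g(\tau)$ does not exceed the average, i.e. $\mathrm{dist}(x(\tau),E)^2\le K^2/T$. Taking square roots gives the $O(T^{-1/2})$ subsequence rate. This part is essentially a direct application of the integral mean-value inequality and requires no new ideas.

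Part (3) is where the main work lies, as it upgrades the subsequence rate to a genuine pointwise rate. The plan is to exploit the eventual monotonicity: fix $t$ large and apply the window estimate to $[t/2,t]$ rather than $[T,2T]$. By part (2)'s averaging argument on $[t/2,t]$ (an interval of length $t/2$), there exists $\tau\in[t/2,t]$ with $\mathrm{dist}(x(\tau),E)^2\le \frac{2}{t}\int_{t/2}^{t}\mathrm{dist}(x(s),E)^2\,ds \le \frac{2K^2}{t}$. Because the distance is eventually nonincreasing for $t\ge T_0$, and $\tau\le t$, we have $\mathrm{dist}(x(t),E)\le \mathrm{dist}(x(\tau),E)\le K\sqrt{2/t}$, giving the pointwise bound with $C=K\sqrt2$. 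The hard part to state carefully is the interplay between the monotonicity threshold $T_0$ and the requirement $\tau\ge t/2\ge T_0$; I would choose $T_0$ large enough that both the trajectory lies in $U$ and monotonicity holds on $[T_0/2,\infty)$, so that every $\tau$ produced lands in the monotone regime. The uniform continuity hypothesis, though listed, plays at most a supporting role here ensuring the distance function is well-behaved; the essential mechanism is monotonicity combined with the windowed averaging.
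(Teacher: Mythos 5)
Your proposal is correct and follows essentially the same route as the paper's proof: integrating the decay inequality against the error bound for (1), windowed averaging on $[T,2T]$ for (2), and the backward window $[t/2,t]$ combined with eventual monotonicity for (3), including the same constant $C=\sqrt{2}\,K$ and the same choice of $T_0$ as the maximum of twice the entry time and twice the monotonicity threshold. Your observation that the uniform continuity hypothesis is not essential to part (3) --- monotonicity plus averaging does the work --- is also consistent with how the paper's argument actually proceeds.
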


\begin{proof}
Let $x(\cdot)$ be a trajectory for which the strict--decay estimate
\begin{equation}\label{eq:strict}
\dot W(t)\le -\gamma\,[N_1(x(t))+N_2(x(t))], \qquad \gamma>0,
\end{equation}
holds. Assume there exist a neighborhood $U$ of $E$ and $c_1,c_2>0$ such that
\begin{equation}\label{eq:eb}
N_i(x)\;\ge\;c_i\,\mathrm{dist}(x,E)^2 \quad \text{for all } x\in U,\; i=1,2,
\end{equation}
and set $c:=c_1+c_2$.

\medskip
\noindent
Fix any time $T_U\ge 0$ such that $x(t)\in U$ for all $t\ge T_U$ (for example, any $T_U$
after which the trajectory remains in $U$). All estimates below are written on $[T_U,\infty)$.

\medskip
\noindent\emph{(1) \textbf{$L^2$--integrability.}}
On $U$, summing \eqref{eq:eb} gives
\[
N_1(x(t))+N_2(x(t)) \;\ge\; c\,\mathrm{dist}(x(t),E)^2 \qquad (t\ge T_U).
\]
Combining with \eqref{eq:strict} yields
\begin{equation}\label{eq:Wdot-dist}
\dot W(t)\;\le\; -\gamma c\,\mathrm{dist}(x(t),E)^2 \qquad (t\ge T_U).
\end{equation}
Integrate \eqref{eq:Wdot-dist} on $[T_U,T]$:
\[
\gamma c \int_{T_U}^{T}\!\mathrm{dist}(x(t),E)^2\,dt
\;\le\; W(x(T_U)) - W(x(T)).
\]
Let $T\to\infty$; since $W(x(t))$ is nonincreasing and $W\ge0$, $\lim_{t\to\infty}W(x(t))$ exists and is finite. Hence
\[
\int_{T_U}^{\infty}\!\mathrm{dist}(x(t),E)^2\,dt
\;\le\;\frac{W(x(T_U))-\lim_{t\to\infty}W(x(t))}{\gamma c}
\;\le\;\frac{W(x(0))-\lim_{t\to\infty}W(x(t))}{\gamma c}<\infty.
\]
Since the integrand is nonnegative, extending the lower limit to $0$ preserves finiteness, proving (1).

\medskip
\noindent\emph{(2) \textbf{Subsequence $O(T^{-1/2})$ rate.}}
For any $T\ge T_U$, integrate \eqref{eq:Wdot-dist} on $[T,2T]$:
\[
\gamma c \int_{T}^{2T}\!\mathrm{dist}(x(t),E)^2\,dt
\;\le\; W(x(T)) - W(x(2T))
\;\le\; W(x(0)) - W_\infty,
\]
where $W_\infty:=\lim_{t\to\infty}W(x(t))$ exists since $W\ge 0$ and is nonincreasing.
Define
\[
K := \sqrt{\frac{W(x(0)) - W_\infty}{\gamma c}},
\qquad\text{so}\qquad
\int_{T}^{2T}\!\mathrm{dist}(x(t),E)^2\,dt \;\le\; K^2.
\]
Because $|[T,2T]|=T$, the average of $\mathrm{dist}(x(t),E)^2$ on $[T,2T]$ satisfies
\[
\frac{1}{T}\int_{T}^{2T}\!\mathrm{dist}(x(t),E)^2\,dt \;\le\; \frac{K^2}{T}.
\]
If $\mathrm{dist}(x(t),E)^2$ were strictly larger than this average for every $t\in[T,2T]$,
integrating would give
\[
\int_{T}^{2T}\!\mathrm{dist}(x(t),E)^2\,dt \;>\; T\cdot \frac{1}{T}\int_{T}^{2T}\!\mathrm{dist}(x(s),E)^2\,ds \;=\; K^2,
\]
a contradiction. Hence there exists $\tau\in[T,2T]$ such that
\[
\mathrm{dist}(x(\tau),E)^2 \;\le\; \frac{1}{T}\int_{T}^{2T}\!\mathrm{dist}(x(t),E)^2\,dt
\;\le\; \frac{K^2}{T},
\]
and therefore $\mathrm{dist}(x(\tau),E)\le \dfrac{K}{\sqrt{T}}$.

\medskip
\noindent\emph{(3) \textbf{Pointwise $O(t^{-1/2})$ under eventual monotonicity.}}
Assume there exists $t_1\ge0$ such that $\mathrm{dist}(x(t),E)$ is nonincreasing on $[t_1,\infty)$.
For any $t\ge \max\{2T_U,2t_1\}$, integrating \eqref{eq:Wdot-dist} on $[t/2,t]$ gives
\[
\int_{t/2}^{t}\!\mathrm{dist}(x(s),E)^2\,ds \le K^2.
\]
Since $|[t/2,t]|=t/2$, there exists $\tau\in[t/2,t]$ with
\[
\mathrm{dist}(x(\tau),E)^2 \le \frac{2K^2}{t}\quad\Rightarrow\quad
\mathrm{dist}(x(\tau),E)\le \frac{\sqrt{2}\,K}{\sqrt{t}}.
\]
By monotonicity and $\tau\le t$,
\[
\mathrm{dist}(x(t),E)\le \mathrm{dist}(x(\tau),E)\le \frac{\sqrt{2}\,K}{\sqrt{t}}.
\]
Thus the claim holds with $T_0:=\max\{2T_U,2t_1\}$ and $C:=\sqrt{2}\,K$.

\medskip
\noindent
The three claims complete the proof.
\end{proof}
\begin{remark}
    If in Proposition~\ref{prop:QCR}, we only assume uniform continuity of $t\mapsto \mathrm{dist}(x(t),E)$, then for every large $T$
there exist $\tau\in[T,2T]$ and a fixed $\Delta>0$ (independent of $T$) such that
$\mathrm{dist}(x(s),E)\le K/\sqrt{T}+\varepsilon$ for all $s\in[\tau,\tau+\Delta]$.
This yields recurring short-interval bounds but not a global pointwise $O(t^{-1/2})$ rate.
\end{remark}

\begin{remark} [\textbf{Quadratic growth $\Rightarrow$ exponential rate}]
Assume there exist $m>0$ and $r>0$ such that
\begin{equation}
\label{eq:quadratic-growth}
W(x)-W_\infty \;\ge\; m\,\mathrm{dist}(x,E)^2
\qquad\text{whenever }\mathrm{dist}(x,E)\le r .   
\end{equation}
Pick $t_0\ge0$ large enough so that $\mathrm{dist}(x(t),E)\le r$ for all $t\ge t_0$
(which holds since $\mathrm{dist}(x(t),E)\to0$). Then, for all $t\ge t_0$,
\[
\dot W(t) \;\le\; -\gamma\,[N_1(x(t))+N_2(x(t))] \;\le\; -\frac{\gamma c}{m}\,[W(x(t))-W_\infty].
\]
By Grönwall inequality on $[t_0,+\infty)$,
\[
W(x(t)) - W_\infty \;\le\; \bigl(W(x(t_0)) - W_\infty\bigr)\,e^{-(\gamma c/m)(t-t_0)} .
\]
Using \eqref{eq:quadratic-growth} again, we have
\[
\mathrm{dist}(x(t),E)
\;\le\; \sqrt{\frac{W(x(t)) - W_\infty}{m}}
\;\le\; \sqrt{\frac{W(x(t_0)) - W_\infty}{m}}\;e^{-(\gamma c/(2m))(t-t_0)} .
\]

\smallskip
\noindent The condition $W-W_\infty \ge m\,\mathrm{dist}^2$ is a local quadratic growth (error–bound) near $E$.
It makes the energy gap equivalent to the squared distance, so strict decay yields exponential
convergence of both $W$ and $\mathrm{dist}(\cdot,E)$. This assumption is standard and checkable in many settings (e.g., strong convexity near isolated minimizers, Kurdyka–Łojasiewicz exponent $1/2$ or Polyak–Łojasiewicz with Lipschitz gradient).
\end{remark}
\subsection{Pointwise and Set Stability Results}
\label{subsec:stability-results}
The convergence results established so far show that trajectories approach the critical set $E$. The next step is to translate this asymptotic behavior into stability properties.
By combining convergence to $E$ with Lyapunov stability of its elements, we obtain pointwise asymptotic stability of the set, semistability of equilibria, and global asymptotic stability when the set reduces to a single point.
These results summarize the qualitative behavior of the system and will later serve as the foundation for the applications.

\medskip
\noindent
We recall the notions of pointwise asymptotic stability (PAS) and semistability (SS) used in the statements below.
\begin{definition}[\textbf{Pointwise Asymptotic Stability}]
\label{def:PAS}
A set $Z$ is said to be \emph{pointwise asymptotically stable (PAS)} for \eqref{eq:prob} if:
\begin{enumerate}
\item[($\mathcal{A}_1$)] every $z \in Z$ is Lyapunov stable;
\item[($\mathcal{A}_2$)] every solution $x(t)$ of \eqref{eq:prob} converges and 
$\lim_{t\to\infty}x(t)\in Z$, i.e., there exists $\delta>0$ such that 
$\|x_0-z\|\le\delta$ implies $\lim_{t\to\infty}x(t)=\bar z\in Z$.
\end{enumerate}
\end{definition}

\begin{definition}[\textbf{Semistability}]
\label{def:SS}
An equilibrium $\bar x\in E$ is said to be \emph{semistable (SS)} if it satisfies 
($\mathcal{A}_1$) and ($\mathcal{A}_2$) for $Z= E$.
\end{definition}
\medskip
\noindent
Pointwise asymptotic stability generalizes asymptotic stability from a single equilibrium to a possibly nonisolated set of equilibria. In noncompact or continuous equilibrium sets, classical asymptotic stability is no longer attainable, since nearby equilibria prevent 
isolation. In such settings, semistability provides the natural notion of convergence: each equilibrium is Lyapunov stable, and trajectories starting near the set converge to one of its elements.

\medskip
\noindent
We now state the main stability consequences of the convergence result established above.
\begin{corollary}[\textbf{Pointwise Asymptotic Stability of the Critical Set}]
\label{cor:PAS}
Let 
\[
E := \{x \in \mathbb{R}^n : N_1(x)=0,\; N_2(x)=0\}.
\]
If every point of $E$ is Lyapunov stable, then $E$ is pointwise asymptotically stable (in the sense of Definition~\ref{def:PAS}). Equivalently, every trajectory $x(\cdot)$ converges and its limit belongs to $E$.
\end{corollary}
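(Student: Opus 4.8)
The plan is to combine the two properties required by Definition~\ref{def:PAS}: Lyapunov stability of each point of $E$ (hypothesis $\mathcal{A}_1$, given) and convergence of every trajectory to a limit lying in $E$ (property $\mathcal{A}_2$, to be established). The first property is assumed outright, so the entire argument reduces to proving $\mathcal{A}_2$. My strategy is to first invoke Theorem~\ref{thm:conv-to-E} to get $\mathrm{dist}(x(t),E)\to 0$, and then upgrade this \emph{set-distance} convergence to genuine convergence of $x(t)$ to a single \emph{point} of $E$, using the Lyapunov stability of the limit candidates as the lever that prevents the trajectory from drifting along $E$.

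First I would fix a trajectory $x(\cdot)$ and note that Theorem~\ref{thm:conv-to-E} (together with the standing uniform-continuity hypothesis on $t\mapsto N_i(x(t))$, which I would assume carries over as in the preceding subsection) gives $\mathrm{dist}(x(t),E)\to 0$. This alone does not yield a limit: a trajectory could approach $E$ while sliding along it without settling. To pin down a single limit point, I would argue as follows. Since $\mathrm{dist}(x(t),E)\to 0$, one can extract a sequence $t_k\to\infty$ and a point $\bar z\in E$ with $x(t_k)\to\bar z$ (on a bounded trajectory this is Bolzano--Weierstrass applied to the orbit together with closedness of $E$, which holds because $N_1,N_2$ are continuous). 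The key step is then to use that $\bar z$, being a point of $E$, is Lyapunov stable by hypothesis $\mathcal{A}_1$. Given any neighborhood $U$ of $\bar z$, Lyapunov stability furnishes a neighborhood $V\subseteq U$ such that any trajectory entering $V$ stays in $U$ forever. Because $x(t_k)\to\bar z$, some $x(t_K)$ lies in $V$; by forward invariance of the stability estimate, $x(t)\in U$ for all $t\ge t_K$. Since $U$ was an arbitrary neighborhood of $\bar z$, this forces $x(t)\to\bar z$, and $\bar z\in E$ delivers exactly $\mathcal{A}_2$.

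The final statement of the corollary, "equivalently, every trajectory converges and its limit belongs to $E$," is then just a restatement of $\mathcal{A}_2$ combined with $\mathcal{A}_1$, so no separate work is needed; asserting PAS in the sense of Definition~\ref{def:PAS} amounts to verifying both clauses, and both are now in hand.

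The main obstacle is the passage from $\mathrm{dist}(x(t),E)\to 0$ to convergence to a \emph{single} point, since without Lyapunov stability the trajectory could oscillate among distinct limit points of $E$; the crucial insight is that Lyapunov stability of the candidate limit $\bar z$ traps the tail of the trajectory in every prescribed neighborhood of $\bar z$ once one iterate is close, which is precisely what rules out such drift. A secondary technical point worth flagging is the extraction of $\bar z$: I rely implicitly on boundedness of the trajectory (to apply Bolzano--Weierstrass) and on closedness of $E$; the former is the standing assumption under which convergence to $E$ was proved, and the latter follows from continuity of $N_1,N_2$. I would state these dependencies explicitly rather than leaving them tacit, since the corollary is phrased without a boundedness hypothesis and this is exactly where it is silently needed.
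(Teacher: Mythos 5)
Your proposal is correct, and it is in fact \emph{more} complete than the paper's own proof of this corollary. The paper argues in one line: Theorem~\ref{thm:conv-to-E} gives $\operatorname{dist}(x(t),E)\to 0$, hence the $\omega$-limit set is contained in $E$, and this is declared to verify condition ($\mathcal{A}_2$); condition ($\mathcal{A}_1$) is the hypothesis. But ($\mathcal{A}_2$) demands that the trajectory \emph{converge to a single point} of $E$, which containment of the $\omega$-limit set in $E$ does not by itself deliver when $E$ is not a singleton --- exactly the ``sliding along $E$'' scenario you identify. Your trapping argument (extract a cluster point $\bar z\in E$, use Lyapunov stability of $\bar z$ to confine the tail of the trajectory to an arbitrary neighborhood $U$ once some $x(t_K)$ enters the associated $V$, conclude $x(t)\to\bar z$) is the standard and correct way to close this gap, and it is essentially the reasoning the paper only gestures at later, in the proof of Corollary~\ref{cor:SS} (``trajectories \ldots cannot drift among different points of $E$''). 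Your argument uses the autonomy of \eqref{eq:prob} implicitly when treating $t\mapsto x(t_K+t)$ as a trajectory starting in $V$; that is fine here and worth stating. You are also right to flag the tacit dependencies: boundedness of the trajectory (for Bolzano--Weierstrass and for the separation step inside Theorem~\ref{thm:conv-to-E}) and the uniform continuity of $t\mapsto N_i(x(t))$ (a hypothesis of Theorem~\ref{thm:conv-to-E} that the corollary does not restate). In short: same skeleton as the paper, but you supply the one step the paper's proof of this particular corollary omits.
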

\begin{proof}
By Theorem~\ref{thm:conv-to-E}, every trajectory $x(\cdot)$ satisfies 
$\operatorname{dist}(x(t),E)\to 0$ as $t\to\infty$, hence the $\omega$–limit set of $x(\cdot)$ is contained in $E$. This verifies condition ($\mathcal{A}_2$) of Definition~\ref{def:PAS}. By assumption, each point of $E$ is Lyapunov stable, which is condition ($\mathcal{A}_1$). Therefore both conditions hold and $E$ is pointwise asymptotically stable.
\end{proof}

\medskip
\noindent
The next result specializes this conclusion to the case where the critical set coincides with the equilibrium set.

\begin{corollary}[\textbf{Semistability of Equilibria}]
\label{cor:SS}
Suppose that the critical set
\[
E := \{x \in \mathbb{R}^n : N_1(x)=0,\; N_2(x)=0\}
\]
coincides with the set of equilibria of the system. If every equilibrium in $E$ is Lyapunov stable, then each equilibrium is semistable: every trajectory $x(\cdot)$ converges and its $\omega$–limit set reduces to a single equilibrium in $E$.
\end{corollary}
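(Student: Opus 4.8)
The plan is to observe that, under the standing hypothesis that the critical set $E$ coincides with the equilibrium set $E_f$, semistability of every equilibrium is nothing other than the pointwise asymptotic stability of $E$ in the sense of Definition~\ref{def:PAS}. Indeed, Definition~\ref{def:SS} declares an equilibrium $\bar x\in E$ semistable precisely when conditions $(\mathcal{A}_1)$ and $(\mathcal{A}_2)$ hold with $Z=E$. Since neither condition singles out a particular point of $E$, verifying them once establishes semistability for all equilibria simultaneously. Thus the proof reduces to checking $(\mathcal{A}_1)$ and $(\mathcal{A}_2)$ for $Z=E$, and then extracting the singleton $\omega$-limit set from the convergence they supply.

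First I would invoke Corollary~\ref{cor:PAS} directly. By hypothesis every point of $E$ is Lyapunov stable, so that corollary applies and yields that $E$ is PAS. Concretely, $(\mathcal{A}_1)$ is exactly the Lyapunov-stability hypothesis, while $(\mathcal{A}_2)$ guarantees that every trajectory $x(\cdot)$ of~\eqref{eq:prob} converges to some limit $\bar x\in E$. Because $E$ coincides with the equilibrium set, the limit $\bar x$ is an equilibrium of the system, which is the content required by Definition~\ref{def:SS}.

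It remains to translate convergence into the statement about the $\omega$-limit set, which is elementary. If $x(t)\to\bar x$ as $t\to\infty$, then for each $\varepsilon>0$ there is $T$ with $x(t)\in\{y\in\mathbb{R}^n:\|y-\bar x\|\le\varepsilon\}$ for all $t\ge T$, so every subsequential limit of $x(\cdot)$ equals $\bar x$; letting $\varepsilon\downarrow 0$ forces the $\omega$-limit set to be exactly $\{\bar x\}$. Consequently each trajectory's $\omega$-limit set reduces to a single equilibrium in $E$, completing the verification that every equilibrium is semistable. No genuine obstacle arises here: the result is a direct specialization of Corollary~\ref{cor:PAS} to $Z=E$, combined with the standard fact that a convergent trajectory has a singleton $\omega$-limit set. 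The only point deserving attention is the definitional bookkeeping—ensuring that the pair $(\mathcal{A}_1)$–$(\mathcal{A}_2)$ established for PAS of $E$ matches verbatim the semistability requirement of Definition~\ref{def:SS}, which holds because that definition sets $Z=E$ precisely when $E$ is the equilibrium set.
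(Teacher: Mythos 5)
Your argument is correct as a formal reduction, but it takes a different route from the paper, and the difference matters. The paper's proof does not cite Corollary~\ref{cor:PAS} at all: it goes back to Theorem~\ref{thm:conv-to-E} to get $\operatorname{dist}(x(t),E)\to 0$, observes that the $\omega$-limit set is therefore contained in $E$, and then supplies the one genuinely nontrivial step — that Lyapunov stability of each equilibrium prevents the trajectory from drifting among distinct points of $E$, so the $\omega$-limit set must collapse to a single equilibrium. That ``no-drift'' argument is the entire mathematical content of the corollary: set convergence $\operatorname{dist}(x(t),E)\to 0$ by itself does \emph{not} imply convergence to a point when $E$ is a continuum, and your closing paragraph about singleton $\omega$-limit sets only applies \emph{after} one already knows $x(t)$ converges. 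Your proof obtains that convergence by reading it off the stated conclusion of Corollary~\ref{cor:PAS} (whose condition $(\mathcal{A}_2)$ does literally assert single-point convergence), so the reduction ``semistability $=$ PAS with $Z=E$'' is legitimate bookkeeping given how the paper states its definitions. What this buys you is brevity and a clean logical dependency; what it costs is that the no-drift mechanism never appears anywhere in your write-up, and since the paper's own proof of Corollary~\ref{cor:PAS} only establishes containment of the $\omega$-limit set in $E$ before asserting $(\mathcal{A}_2)$, your citation ultimately rests on exactly the step that the paper chose to spell out here instead. A self-contained version of your argument should include that step: if $\bar x\in E$ is an $\omega$-limit point, Lyapunov stability of $\bar x$ forces the trajectory, once it enters a small neighborhood of $\bar x$, to remain in a prescribed neighborhood forever, which rules out any other $\omega$-limit point.
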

\begin{proof}
By Theorem~\ref{thm:conv-to-E}, every trajectory $x(\cdot)$ satisfies 
$\operatorname{dist}(x(t),E)\to 0$ as $t\to\infty$, hence the $\omega$–limit set of $x(\cdot)$ is contained in $E$. This guarantees convergence to equilibria. Since each equilibrium in $E$ is Lyapunov stable, trajectories starting near an equilibrium cannot drift among different points of $E$, and thus must converge to a single equilibrium. This establishes semistability.
\end{proof}
\medskip
\noindent
Finally, when the critical set reduces to a single equilibrium, the result recovers global asymptotic stability.
\begin{corollary}[\textbf{Asymptotic Stability of an Equilibrium}]
\label{cor:AS}
If the critical set reduces to a singleton
\[
E = \{x^\ast\},
\]
then $x^\ast$ is globally asymptotically stable.
\end{corollary}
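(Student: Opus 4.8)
The plan is to recognize this statement as the singleton specialization of the pointwise-asymptotic-stability result in Corollary~\ref{cor:PAS}, and to exploit the elementary fact (already noted after Definition~\ref{def:SS}) that PAS of a one-point set is precisely global asymptotic stability. I would split the argument into a global-attractivity step and a Lyapunov-stability step, then combine them.

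First I would establish global attractivity. Applying Theorem~\ref{thm:conv-to-E} along every trajectory, under the standing uniform-continuity hypothesis on $t\mapsto N_i(x(t))$, gives $\mathrm{dist}(x(t),E)\to 0$ as $t\to\infty$. Since $E=\{x^\ast\}$ is a single point, this distance equals $\|x(t)-x^\ast\|$, so $x(t)\to x^\ast$ for every initial condition. This verifies condition $(\mathcal{A}_2)$ of Definition~\ref{def:PAS} with $Z=\{x^\ast\}$, and it does so for all trajectories rather than only those starting near $x^\ast$, which is exactly what promotes attractivity to global attractivity. I would also record that $x^\ast$ is necessarily an equilibrium: since $x(t)\to x^\ast$ under the continuous (indeed locally Lipschitz) field $f$, one has $\dot x(t)=f(x(t))\to f(x^\ast)$, and a convergent trajectory forces $f(x^\ast)=0$, so $x^\ast\in E_f\subseteq E=\{x^\ast\}$.

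Next I would address Lyapunov stability of $x^\ast$, which is condition $(\mathcal{A}_1)$. In the singleton setting the hypothesis of Corollary~\ref{cor:PAS} that every point of $E$ be Lyapunov stable reduces to the single requirement that $x^\ast$ be Lyapunov stable, and this is the ingredient I would invoke. With both $(\mathcal{A}_1)$ and $(\mathcal{A}_2)$ in hand for $Z=\{x^\ast\}$, Corollary~\ref{cor:PAS} yields that $\{x^\ast\}$ is PAS; but for a one-point set, Lyapunov stability together with convergence of every trajectory to that point is by definition global asymptotic stability, which closes the argument.

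The main obstacle is the Lyapunov-stability step: attractivity alone does not imply stability—there are classical globally attractive yet unstable equilibria—so this property cannot be extracted from the convergence furnished by Theorem~\ref{thm:conv-to-E} and must be supplied separately. In the present framework it is inherited from the hypothesis of Corollary~\ref{cor:PAS}. Alternatively, if one wishes to derive it internally, the natural route is to exhibit a positive-definiteness property of the composite function at $x^\ast$—for instance, showing $W(x)-W(x^\ast)>0$ for $x\neq x^\ast$ near $x^\ast$—so that the nonincreasing, positive-definite $W$ acts as a genuine Lyapunov function and the classical direct method delivers stability. I would flag that, whichever route is chosen, the remaining passage from the set-valued conclusion to the singleton statement is routine; the only genuine content is the stability hypothesis together with the global reach of the attractivity already established.
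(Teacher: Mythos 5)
Your argument follows the same overall route as the paper: invoke Theorem~\ref{thm:conv-to-E} to get $\mathrm{dist}(x(t),E)\to 0$, observe that for $E=\{x^\ast\}$ this is exactly $x(t)\to x^\ast$ for every trajectory (global attractivity), and then supply Lyapunov stability of $x^\ast$ separately. The one place where you genuinely diverge is the stability step, and there your treatment is more careful than the paper's. The paper disposes of it with the sentence ``Lyapunov stability of $x^\ast$ follows from its invariance and uniqueness within $E$,'' which as written is not a valid deduction: invariance plus uniqueness plus global attractivity do not imply Lyapunov stability (the classical examples of globally attractive yet unstable equilibria that you cite are precisely the obstruction). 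You instead import the stability of $x^\ast$ as a hypothesis, reading Corollary~\ref{cor:AS} as the singleton specialization of Corollary~\ref{cor:PAS} whose statement explicitly assumes every point of $E$ is Lyapunov stable, and you offer the alternative of deriving stability from local positive definiteness of the nonincreasing composite function $W$ at $x^\ast$. Either route is sound and makes explicit an assumption the paper's proof leaves implicit; strictly speaking, the corollary as stated (with no stability or positive-definiteness hypothesis on $x^\ast$) is not fully justified by the paper's own one-line argument, so your flagging of this point is the right call rather than a defect of your proposal. Your additional observation that $x^\ast$ must be an equilibrium (since a convergent trajectory under a locally Lipschitz field forces $f(x^\ast)=0$) is a small bonus not present in the paper.
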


\begin{proof}
By Theorem~\ref{thm:conv-to-E}, every trajectory $x(\cdot)$ satisfies 
$\operatorname{dist}(x(t),E)\to 0$ as $t\to\infty$. Since $E$ consists of the single equilibrium $x^\ast$, this implies $x(t)\to x^\ast$. Lyapunov stability of $x^\ast$ follows from its invariance and uniqueness within $E$. Hence $x^\ast$ is globally asymptotically stable.
\end{proof}
\begin{remark}[\textbf{Set stability versus point stability}]
Classical Lyapunov theory is typically concerned with the asymptotic stability of an isolated 
equilibrium point $x^\ast$, where stability means that solutions starting near $x^\ast$ remain 
close, and asymptotic stability means they converge to $x^\ast$. 
The framework developed here is more general. The critical set
\[
E := \{x \in \mathbb{R}^n : N_1(x)=0,\; N_2(x)=0\}
\]
need not be a singleton: it may contain infinitely many equilibria or form a smooth manifold. 

\medskip
\noindent
This extends the classical notion of asymptotic stability from isolated equilibria to general 
invariant sets. In such cases, it is natural to ask for stability of the set $E$ rather than of 
a single point. The strict decay inequality ensures that $\operatorname{dist}(x(t),E)\to 0$ as 
$t\to\infty$, establishing \emph{asymptotic stability of the set $E$}. This generalization is 
particularly relevant for dynamical systems arising in optimization and control, such as the 
inertial gradient–like system and the Primal-–Dual gradient flow studied in the next section.
\end{remark}
\section{Applications and Refined Results}
\subsection{Case Study I: Inertial Gradient–Like System}
In this case, we study the inertial gradient–like system studied in~\cite{Attouch}. Compared with their approach, which relies on Opial’s lemma and weak convergence arguments, our framework uses a strict decay inequality that makes the proof more direct. In addition, this allows us to establish semistability and strong convergence in the convex case. The same inertial gradient–like dynamics have also been recently analyzed using Lyapunov pairs to characterize stability~\cite{SaoudTheraDao2025}, and through refined invariance principles providing a precise location of the $\omega$-limit set~\cite{dao2023}.

\medskip
\noindent
Let $\Phi:\mathbb{R}^n \to \mathbb{R}$ be of class $C^2$, bounded from below, and with
Lipschitz continuous Hessian on bounded sets of $\mathbb{R}^n$. 
We consider the second order dynamical system
\begin{equation}\label{eq:DIN}
\ddot x(t) + \alpha \dot x(t) + \beta \nabla^2 \Phi(x(t))\,\dot x(t) + \nabla \Phi(x(t)) = 0,
\qquad \alpha,\beta>0,
\end{equation}
This system can be equivalently written in the phase space, as a first-order Cauchy problem. Introduce the variable $y(t)=(x(t),v(t))$ with $v(t)=\dot x(t)$, we obtain
\[
\dot x = v, \qquad 
\dot v = -\alpha v - \nabla\Phi(x) - \beta \nabla^2\Phi(x)\,v.
\]
The corresponding equilibrium set is $S = \{(x,v): v=0,\ \nabla\Phi(x)=0\}$. 
Moreover, define the function 
\[
W(x,v) = (\alpha\beta+1)\,\Phi(x) + \tfrac12\|v+\beta\nabla\Phi(x)\|^2.
\]
A direct computation (see. Attouch2002), yields 
\[
\dot W = -\alpha\|v\|^2 - \beta\|\nabla\Phi(x)\|^2.
\]
Introduce the observables
\[
N_1(x,v):=\|v\|^2, 
\qquad 
N_2(x,v):=\|\nabla\Phi(x)\|^2,
\]
so that
\[
\dot W = -\alpha N_1 - \beta N_2 \le 0.
\]
Since $W$ is nonincreasing and bounded below, Proposition~\ref{prop:integral-vanishing} applies and gives the integral bounds 
\[
\int_0^\infty N_1(y(s))\,ds < \infty, \qquad
\int_0^\infty N_2(y(s))\,ds < \infty,
\]
hence, $\dot x$ and $\nabla\Phi(x)$  belong to $L^2 (0,+\infty)$.

\medskip
\noindent
Now assume that the trajectory $(x(t),v(t))$ is bounded. Then both  $\nabla\Phi$ and $\nabla^2\Phi$ are bounded
on it. From the dynamics, $\dot v$ is bounded, so $v$ is Lipschitz and $N_1$ is uniformly
continuous. Moreover,
\[
\frac{d}{dt}\nabla\Phi(x(t)) = \nabla^2\Phi(x(t))v(t)
\]
is bounded, hence $\nabla\Phi(x(t))$ is Lipschitz and $N_2$ is uniformly continuous.  
By Proposition~\ref{prop:integral-vanishing}, the combination of strict decay and uniform continuity yields
\[
N_1(y(t))\to0, \qquad N_2(y(t))\to 0, \qquad t \to +\infty
\]
Therefore
\[
\mathrm{dist}((x(t),v(t)),S)\to0.
\]
By Corollary~\ref{cor:PAS}, this shows the \emph{Pointwise Asymptotic Stability (PAS)} of the critical set $S = \{y \in \mathbb{R}^n \times \mathbb{R}^n : N_1 (y) = N_2 (y) = 0\}$.

\medskip
\noindent
The above analysis shows that bounded trajectories converge to the critical set  
\[
S = \{(x,v) \in \mathbb{R}^n \times \mathbb{R}^n : v=0,\ \nabla\Phi(x)=0\},
\]  
establishing pointwise asymptotic stability in the sense of Corollary~\ref{cor:PAS}.  

\medskip
\noindent
To go further, we now specialize to the case where $\Phi$ is convex. In this setting, the condition $\nabla\Phi(x)=0$ is equivalent to $x \in \operatorname{Argmin}\Phi$, so the critical set reduces to  
\[
S = \operatorname{Argmin}\Phi \times \{0\}.
\]  
Hence convergence to $S$ ensures that trajectories approach the minimizer set,  
\[
\operatorname{dist}(x(t), \operatorname{Argmin}\Phi) \to 0.
\]  
While this guarantees convergence to the set of minimizers, it still leaves open the possibility of multiple cluster points within $\operatorname{Argmin}\Phi$. To address this and prove convergence to a single minimizer, we refine the Lyapunov construction by introducing a perturbed functional $W_\varepsilon$. This perturbation anchors the dynamics at a reference minimizer and enables the application of the semistability result given in Corollary~\ref{cor:SS}.

\medskip
\noindent
To strengthen the convergence result in the convex case, we introduce a perturbed Lyapunov 
functional. Fix a reference point $z \in \operatorname{Argmin}\Phi$ and, for small $\varepsilon > 0$, define  
\[
W_\varepsilon(x,v) = W(x,v) + \varepsilon\Big(\frac{\alpha}{2}\|x-z\|^2 
+ \langle v+\beta\nabla\Phi(x),\,x-z\rangle\Big).
\]  
This functional is equivalent to $W$ and remains bounded from below. Differentiating along 
the trajectories of the system gives  
\[
\dot W_\varepsilon = -\alpha\|v\|^2 - \beta\|\nabla\Phi(x)\|^2 
+ \varepsilon\Big(\|v\|^2 + \beta\langle\nabla\Phi(x),v\rangle - \langle\nabla\Phi(x),x-z\rangle\Big).
\]  
Since $\Phi$ is convex and $z$ is a minimizer, the last inner product satisfies 
$\langle\nabla\Phi(x),x-z\rangle \ge 0$. We may therefore drop this term to obtain the upper bound  
\[
\dot W_\varepsilon \le -\alpha\|v\|^2 - \beta\|\nabla\Phi(x)\|^2 
+ \varepsilon\big(\|v\|^2 + \beta\langle\nabla\Phi(x),v\rangle\big).
\]  
The remaining cross term can be estimated using Young’s inequality as follows
\[
\beta|\langle\nabla\Phi(x),v\rangle| \le \tfrac{1}{2}\|v\|^2 + \tfrac{\beta^2}{2}\|\nabla\Phi(x)\|^2.
\]  
For $0 < \varepsilon < \min\Big\{\tfrac{2}{3}\alpha,\ \tfrac{2}{\beta}\Big\}$ and substituting the previous estimate leads to  
\[
\dot W_\varepsilon \le -\Big(\alpha - \tfrac{3}{2}\varepsilon\Big)\|v\|^2 
- \Big(\beta - \tfrac{\beta^2}{2}\varepsilon\Big)\|\nabla\Phi(x)\|^2.
\]  
Therefore, there exists  
\[
c_\varepsilon := \min\Big\{\alpha - \tfrac{3}{2}\varepsilon,\ \beta - \tfrac{\beta^2}{2}\varepsilon\Big\} > 0,
\]  
so that 
\[
\dot W_\varepsilon \le -c_\varepsilon\big(\|v\|^2 + \|\nabla\Phi(x)\|^2\big) 
= -c_\varepsilon(N_1+N_2).
\]  
This inequality shows that $W_\varepsilon$ is nonincreasing and convergent. In particular, 
$\dot x$ and $\nabla\Phi(x)$ both belong to $L^2(0,\infty)$. Since the trajectory is bounded, 
the functions $N_1$ and $N_2$ are uniformly continuous along the trajectory, and 
Proposition~\ref{prop:integral-vanishing} guarantees that  
\[
v(t) \to 0 \quad\text{and}\quad \nabla\Phi(x(t)) \to 0 \quad\text{as } t\to\infty.
\]  
It remains to identify the limiting point. By construction, the difference between 
$W_\varepsilon$ and $W$ is  
\[
W_\varepsilon(t) - W(t) = \varepsilon\Big(\frac{\alpha}{2}\|x(t)-z\|^2 
+ \langle v(t)+\beta\nabla\Phi(x(t)),\,x(t)-z\rangle\Big).
\]  
Both $W_\varepsilon$ and $W$ converge to finite limits, so their difference does as well. 
The inner product term vanishes asymptotically because $v(t)\to 0$ and $\nabla\Phi(x(t))\to 0$. 
Consequently, the limit of $\|x(t)-z\|$ exists for every minimizer $z \in \operatorname{Argmin}\Phi$.  

\medskip
\noindent
Since $x(t)$ is bounded, it admits cluster points. Every cluster point lies in 
$\operatorname{Argmin}\Phi$, but the existence of $\lim_{t\to\infty}\|x(t)-z\|$ for all $z$ 
forces the cluster set to be a singleton. Thus the trajectory converges strongly to one minimizer,  
\[
x(t)\to x^* \in \operatorname{Argmin}\Phi.
\]  
In conclusion, the perturbed Lyapunov functional $W_\varepsilon$ establishes convergence of 
every bounded trajectory to a single minimizer. This corresponds to \emph{Semistability (SS)} 
of the equilibria in the sense of Corollary~\ref{cor:SS}: each equilibrium in $E$ is Lyapunov stable, 
and every trajectory converges to one equilibrium. If the minimizer set is reduced to a singleton, then semistability and pointwise asymptotic stability coincide, and we recover  classical asymptotic stability of the unique equilibrium (see. Corollary~\ref{cor:AS}).
 

\subsection{Case Study II: Primal-–Dual Gradient Flow}
\noindent
We now consider the Primal--Dual gradient flow, a continuous-time model for equality-constrained convex optimization problems. It is used in communication networks (resource allocation, congestion control), power systems and decentralized control, and in analyses of first-order saddle-point methods for machine learning and inverse problems. Many works study this flow (see for example,~\cite{Feijer2010,Cherukuri2016,Ozaslan2024,Apidopoulos2025})
 
Here we treat the baseline model within our strict-decay framework and obtain asymptotic stability of the Karush--Kuhn--Tucker (KKT) set, with a semistability refinement.

\medskip
\noindent
Consider the equality-constrained convex optimization problem
\[
\min_{x \in \mathbb{R}^n} \Phi(x)
\quad \text{subject to } A x = b,
\]
where $\Phi:\mathbb{R}^n \to \mathbb{R}$ is convex and continuously differentiable with locally Lipschitz gradient, $A \in \mathbb{R}^{m \times n}$, and $b \in \mathbb{R}^m$.
The Lagrangian is
\[
L(x,\lambda) = \Phi(x) + \lambda^\top (A x - b),
\]
with multiplier $\lambda \in \mathbb{R}^m$.
The Karush--Kuhn--Tucker  conditions are
\[
\nabla_x L(x,\lambda) = \nabla \Phi(x) + A^\top \lambda = 0,
\qquad
\nabla_\lambda L(x,\lambda) = A x - b = 0.
\]
Any pair $(x^*,\lambda^*)$ satisfying them is a saddle point of $L$ and a KKT equilibrium.

\medskip
\noindent
A natural continuous-time approach is the Primal--Dual (Arrow--Hurwicz--Uzawa) gradient flow:
\[
\dot{x} = -\nabla_x L(x,\lambda),
\qquad
\dot{\lambda} = \nabla_\lambda L(x,\lambda),
\]
i.e.,
\[
\dot{x} = -\nabla \Phi(x) - A^\top \lambda,
\qquad
\dot{\lambda} = A x - b.
\]
This system, known as the \emph{Arrow--Hurwicz--Uzawa flow} (see~\cite{Arrow1958}), represents a continuous-time analogue of the classical Primal--Dual algorithm. The variable $x$ seeks to minimize the Lagrangian, while $\lambda$ enforces the constraint by driving the residual $A x - b$ to zero.

\medskip
\noindent
The equilibrium (KKT) set is
\[
S = \{ (x,\lambda) : \nabla \Phi(x) + A^\top \lambda = 0,\; A x = b \}.
\]
For stability, use the Lyapunov functional
\[
W(x,\lambda)
= \Phi(x) - \Phi(x^*)
+ \tfrac{1}{2}\|A x - b\|^2
+ \tfrac{1}{2}\|\lambda - \lambda^*\|^2,
\]
with $(x^*,\lambda^*) \in S$. Along trajectories,
\[
\dot{W}
= -\|A x - b\|^2
- \|\nabla \Phi(x) + A^\top \lambda\|^2.
\]
Define
\[
N_1(x,\lambda) = \|A x - b\|^2,
\qquad
N_2(x,\lambda) = \|\nabla \Phi(x) + A^\top \lambda\|^2,
\]
Then
\[
\dot{W} = -N_1 - N_2.
\]
This is a strict-decay identity in our framework. Since $W$ is nonincreasing and bounded below, we have
\[
\int_0^\infty N_1(x(t),\lambda(t))\,dt < \infty,
\qquad
\int_0^\infty N_2(x(t),\lambda(t))\,dt < \infty.
\]
If  $(x(t),\lambda(t))$ is bounded, local Lipschitzness of the vector field makes $N_1$ and $N_2$ uniformly continuous along the trajectory. By the integral-vanishing principle given in Proposition~\ref{prop:integral-vanishing},
\[
N_1(t) \to 0, \qquad N_2(t) \to 0;
\]
thus
\[
A x(t) \to b,
\qquad
\nabla \Phi(x(t)) + A^\top \lambda(t) \to 0.
\]
Therefore,
\[
\operatorname{dist}((x(t),\lambda(t)),S) \to 0,
\]
establishing asymptotic stability of $S$. With a unique constrained minimizer, the trajectory converges to that KKT point; otherwise it converges to the equilibrium set.

\medskip
\noindent
To capture semistability, perturb $W$ by a skew term that couples primal and dual residuals. For $\varepsilon>0$, set
\[
W_\varepsilon(x,\lambda)
:= W(x,\lambda)
+ \varepsilon\Big(\langle x-x^*,\,\nabla\Phi(x)+A^\top\lambda\rangle
- \langle \lambda-\lambda^*,\,A x-b\rangle\Big).
\]
On bounded sublevels of $W$, the gradient $\nabla\Phi$ is $L$-Lipschitz, so $W_\varepsilon$ and $W$ are equivalent for small~$\varepsilon$.

\medskip
\noindent
To estimate $\dot W_\varepsilon$, note that since $\nabla\Phi$ is $L$-Lipschitz and $x(\cdot)$ is absolutely continuous,
\[
\Big\|\tfrac{d}{dt}\nabla\Phi(x(t))\Big\|
= \lim_{h\to 0}\frac{\|\nabla\Phi(x(t+h))-\nabla\Phi(x(t))\|}{|h|}
\le L\|\dot{x}(t)\|
= L\|\nabla\Phi(x)+A^\top\lambda\|
\quad \text{for a.e. }t.
\]
Using the system dynamics and this inequality, we have
\[
\begin{aligned}
\frac{d}{dt}\langle x-x^*,\nabla\Phi(x)+A^\top\lambda\rangle
&=\langle\dot{x},\nabla\Phi(x)+A^\top\lambda\rangle
+\langle x-x^*,\tfrac{d}{dt}\nabla\Phi(x)+A^\top\dot{\lambda}\rangle\\
&=-\|\nabla\Phi(x)+A^\top\lambda\|^2
+\langle x-x^*,A^\top(Ax-b)\rangle
+\langle x-x^*,\tfrac{d}{dt}\nabla\Phi(x)\rangle,\\[4pt]
\frac{d}{dt}\langle \lambda-\lambda^*,A x-b\rangle
&=\langle\dot{\lambda},A x-b\rangle+\langle \lambda-\lambda^*,A\dot{x}\rangle\\
&=\|A x-b\|^2-\langle \lambda-\lambda^*,A(\nabla\Phi(x)+A^\top\lambda)\rangle.
\end{aligned}
\]
Substituting these expressions into $\dot{W}_\varepsilon$ gives
\[
\begin{aligned}
\dot{W}_\varepsilon
&=\dot{W}
+\varepsilon\Big(\tfrac{d}{dt}\langle x-x^*,\cdot\rangle
-\tfrac{d}{dt}\langle \lambda-\lambda^*,\cdot\rangle\Big)\\
&=-(1+\varepsilon)\|A x-b\|^2
-(1+\varepsilon)\|\nabla\Phi(x)+A^\top\lambda\|^2\\
&\quad+\varepsilon\Big(
\langle x-x^*,A^\top(Ax-b)\rangle
-\langle \lambda-\lambda^*,A(\nabla\Phi(x)+A^\top\lambda)\rangle
+\langle x-x^*,\tfrac{d}{dt}\nabla\Phi(x)\rangle
\Big).
\end{aligned}
\]
On a bounded sublevel of $W$, we estimate the cross terms using the Cauchy--Schwarz and Young inequalities 
($ab\le 2\eta\,a^2+\displaystyle\frac{1}{8\eta}\,b^2$ with $\eta>0$):
\begin{align*}
|\langle x-x^*,A^\top(Ax-b)\rangle|
&\le \|A\|\,\|x-x^*\|\,\|A x-b\|
\le 2\eta_1\|A x-b\|^2+\frac{\|A\|^2}{8\eta_1}\|x-x^*\|^2,\\[4pt]
|\langle \lambda-\lambda^*,A(\nabla\Phi(x)+A^\top\lambda)\rangle|
&\le \|A\|\,\|\lambda-\lambda^*\|\,\|\nabla\Phi(x)+A^\top\lambda\|\\
&\le 2\eta_2\|\nabla\Phi(x)+A^\top\lambda\|^2
+\frac{\|A\|^2}{8\eta_2}\|\lambda-\lambda^*\|^2,\\[4pt]
|\langle x-x^*,\tfrac{d}{dt}\nabla\Phi(x)\rangle|
&\le \|x-x^*\|\,\Big\|\tfrac{d}{dt}\nabla\Phi(x)\Big\|
\le L\|x-x^*\|\,\|\nabla\Phi(x)+A^\top\lambda\|\\
&\le 2\eta_3\|\nabla\Phi(x)+A^\top\lambda\|^2
+\frac{L^2}{8\eta_3}\|x-x^*\|^2.
\end{align*}
Inserting these bounds into the previous inequality yields
\[
\begin{aligned}
\dot{W}_\varepsilon
&\le -(1+\varepsilon)\|A x-b\|^2
-(1+\varepsilon)\|\nabla\Phi(x)+A^\top\lambda\|^2\\
&\quad+\varepsilon\Big(2\eta_1\|A x-b\|^2
+2(\eta_2+\eta_3)\|\nabla\Phi(x)+A^\top\lambda\|^2\Big)\\
&\quad+\varepsilon\Big(
\frac{\|A\|^2}{8\eta_1}\|x-x^*\|^2
+\frac{\|A\|^2}{8\eta_2}\|\lambda-\lambda^*\|^2
+\frac{L^2}{8\eta_3}\|x-x^*\|^2\Big).
\end{aligned}
\]
Collecting coefficients gives
\[
\dot{W}_\varepsilon
\le -\big(1+\varepsilon-2\varepsilon\eta_1\big)\|A x-b\|^2
-\big(1+\varepsilon-2\varepsilon(\eta_2+\eta_3)\big)\|\nabla\Phi(x)+A^\top\lambda\|^2
+\varepsilon R(x,\lambda),
\]
where
\[
R(x,\lambda)
=\frac{\|A\|^2}{8\eta_1}\|x-x^*\|^2
+\frac{\|A\|^2}{8\eta_2}\|\lambda-\lambda^*\|^2
+\frac{L^2}{8\eta_3}\|x-x^*\|^2.
\]
To control $R$ by $W$, note that since $x^*$ minimizes $\Phi$, we have $\Phi(x)-\Phi(x^*)\ge0$, and therefore $W(x,\lambda)\ge0$. Moreover,
\[
\|\lambda-\lambda^*\|^2\le 2\,W(x,\lambda), 
\qquad 
\|A x-b\|^2=\|A(x-x^*)\|^2\le \|A\|^2\|x-x^*\|^2.
\]
Working on a bounded sublevel $\{W\le W_0\}$, which is compact since the trajectory is bounded, there exist $\alpha'>0$ and $r>0$ such that
\[
\|x-x^*\|^2\le \frac{1}{\alpha'}\,W(x,\lambda)
\quad\text{for }\|(x,\lambda)-(x^*,\lambda^*)\|\le r.
\]
Outside this neighborhood, continuity on the compact set implies
\[
\delta:=\min_{K}W>0,
\qquad 
M:=\max_{\{W\le W_0\}}\|x-x^*\|<\infty,
\]
so that $\|x-x^*\|^2\le (M^2/\delta)\,W(x,\lambda)$ for all $(x,\lambda)\in K$. Setting
\[
C_x=\max\Big\{\frac{1}{\alpha'},\frac{M^2}{\delta}\Big\},
\]
we have $\|x-x^*\|^2\le C_x\,W(x,\lambda)$ on $\{W\le W_0\}$, and thus
\[
R(x,\lambda)\le
\Big(\frac{\|A\|^2}{8\eta_1}+\frac{L^2}{8\eta_3}\Big)C_x\,W(x,\lambda)
+\frac{\|A\|^2}{8\eta_2}\cdot 2\,W(x,\lambda)
= c_0\,W(x,\lambda),
\]
where
\[
c_0
=\Big(\frac{\|A\|^2}{8\eta_1}+\frac{L^2}{8\eta_3}\Big)C_x+\frac{\|A\|^2}{4\eta_2}.
\]
Hence,
\[
\dot{W}_\varepsilon
\le -a_1\|A x-b\|^2
-a_2\|\nabla\Phi(x)+A^\top\lambda\|^2
+\varepsilon c_0 W,
\qquad
\begin{cases}
a_1=1+\varepsilon-2\varepsilon\eta_1,\\[0.2em]
a_2=1+\varepsilon-2\varepsilon(\eta_2+\eta_3).
\end{cases}
\]
On the same compact set, there exists $\kappa>0$ such that
\[
W(x,\lambda)
\le \kappa\big(\|A x-b\|^2+\|\nabla\Phi(x)+A^\top\lambda\|^2\big),
\]
since both sides are continuous and vanish only at $(x^*,\lambda^*)$. Consequently,
\[
\varepsilon c_0 W
\le \varepsilon c_0 \kappa
\big(\|A x-b\|^2+\|\nabla\Phi(x)+A^\top\lambda\|^2\big).
\]
Choosing $\varepsilon>0$ sufficiently small so that 
$\varepsilon c_0 \kappa \le \tfrac12\min\{a_1,a_2\}$, we get
\[
\dot{W}_\varepsilon
\le -(a_1-\varepsilon c_0 \kappa)\|A x-b\|^2
-(a_2-\varepsilon c_0 \kappa)\|\nabla\Phi(x)+A^\top\lambda\|^2.
\]
Both coefficients are strictly positive; setting
\[
c_1=a_1-\varepsilon c_0\kappa>0,
\qquad
c_2=a_2-\varepsilon c_0\kappa>0,
\]
we finally obtain
\[
\dot{W}_\varepsilon
\le -c_1\|A x-b\|^2
-c_2\|\nabla\Phi(x)+A^\top\lambda\|^2.
\]
This shows that $\dot W_\varepsilon$ is strictly negative outside equilibrium along bounded trajectories.

\medskip
\noindent
To conclude, the perturbed Lyapunov function $W_\varepsilon$ is positive definite with respect to the KKT set $S$ and strictly decreases along trajectories outside $S$. Therefore every KKT equilibrium is Lyapunov stable, and
\[
\|A x(t)-b\|\to 0,
\qquad
\|\nabla\Phi(x(t))+A^\top\lambda(t)\|\to 0,
\]
whence
\[
\operatorname{dist}((x(t),\lambda(t)),S)\to 0.
\]
Thus $S$ is \emph{semistable}. If the constrained minimizer of $\Phi$ subject to $A x=b$ is unique, the trajectory converges to that single KKT point.
\section{Conclusion}

The paper developed a composite Lyapunov framework that establishes strict decay through a pair of differential inequalities.
This construction provides a unified and generalized structure that encompasses both the classical Lyapunov method—based on single-function decay—and the Matrosov auxiliary-function framework, where multiple functions interact to ensure convergence.
By merging these two perspectives into a single principle of composite decay inequalities, the approach delivers a direct and constructive path from decay estimates to integral bounds, convergence to the critical set, and stability of equilibria and invariant sets, all without relying on compactness or invariance assumptions.
Applications to inertial gradient–like systems and primal–dual gradient flows illustrate how this unified framework captures semistability and convergence in optimization-driven dynamics while offering a transparent analytical tool for broader classes of nonlinear systems.





\bibliographystyle{plain}
\bibliography{refs}

\end{document}